\documentclass[12pt]{article}

\usepackage[margin=1in]{geometry}
\usepackage{amsthm}
\usepackage{amssymb}
\usepackage{amsmath}
\usepackage{graphicx}
\usepackage{mathrsfs}
\usepackage{url}
\usepackage{color}
\usepackage{framed}
\usepackage[table]{xcolor}
\usepackage[ruled,vlined]{algorithm2e}
\usepackage{mathtools}

\newtheorem{theorem}{Theorem}

\newtheorem{lemma}[theorem]{Lemma}
\newtheorem{proposition}[theorem]{Proposition}

\theoremstyle{definition}
\newtheorem{problem}[theorem]{Problem}

\newtheorem{definition}[theorem]{Definition}

\newcommand{\lb}{(}
\newcommand{\rb}{)}

\newcommand{\cL}{\mathscr{L}}
\newcommand{\bF}{\mathbf{F}}
\newcommand{\ip}[2]{\left<#1,#2\right>}

\newcommand{\absip}[2]{\left\vert\left<#1,#2\right>\right\vert}

\newcommand{\beq}{\begin{equation}}
\newcommand{\eeq}{\end{equation}}

\newcommand{\ejk}[1]{#1}

\title{Testing isomorphism between tuples of subspaces}

\author{Emily J.~King\footnote{Department of Mathematics, Colorado State University, Fort Collins, Colorado, USA} \and Dustin~G.~Mixon\footnote{Department of Mathematics, The Ohio State University, Columbus, Ohio, USA} \footnote{Translational Data Analytics Institute, The Ohio State University, Columbus, Ohio, USA} \and Shayne Waldron\footnote{Department of Mathematics, The University of Auckland, Auckland, New Zealand}}
\date{}

\begin{document}
\maketitle

\begin{abstract}
Given two tuples of subspaces, can you tell whether the tuples are isomorphic?
We develop theory and algorithms to address this fundamental question.
We focus on isomorphisms in which the ambient vector space is acted on by either a unitary group or general linear group.
If isomorphism also allows permutations of the subspaces, then the problem is at least as hard as graph isomorphism.
Otherwise, we provide a variety of polynomial-time algorithms with Matlab implementations to test for isomorphism. \textbf{Keywords}: subspace isomorphism, Grassmannian, Bargmann invariants, $H^\ast$-algebras, quivers, graph isomorphism \textbf{MSC2020}: 14M15, 05C60, 81R05, 16G20
\end{abstract}

\section{Introduction}

The last decade has seen a surge of work to find arrangements of points in real and complex Grassmannian spaces that are well spaced in some sense; for example, one may seek \textbf{optimal codes}, which maximize the minimum distance between points, or \textbf{designs}, which offer an integration rule.
While precursor work in this vein appeared between the '50s and '80s by Rankin~\cite{Rankin:55,Rankin:56}, Grey~\cite{Grey:62}, Seidel~\cite{Seidel:73}, Welch~\cite{Welch:74}, and Levenshtein~\cite{Levenstein:82}, the seminal paper by Conway, Hardin and Sloane~\cite{ConwayHS:96} arrived later in 1996.
The recent resurgence of interest in this problem has been largely stimulated by emerging applications in multiple description coding~\cite{StrohmerH:03}, digital fingerprinting~\cite{MixonQKF:13}, compressed sensing~\cite{BandeiraFMW:13}, and quantum state tomography~\cite{RenesBSC:04}.

There have been several fruitful approaches to studying arrangements of points in the Grassmannian.
First, it is natural to consider highly symmetric arrangements of points.
Such arrangements were extensively studied in~\cite{ValeW:04,ChienW:11,BroomeW:13,Waldron:13,ValeW:16,ChienW:16} in the context of designs, and later, symmetry was used to facilitate the search for optimal codes~\cite{IversonJM:16,IversonJM:17,IversonM:18,Kopp:18,BodmannK:18,King:19,IversonM:19}.
In many cases, the symmetries that underly optimal codes can be abstracted to weaker combinatorial structures that produce additional codes.
For example, one may use strongly regular graphs to obtain optimal codes in $\operatorname{Gr}(1,\mathbf{R}^d)$~\cite{Waldron:09}, or use Steiner systems to obtain optimal codes in $\operatorname{Gr}(1,\mathbf{C}^d)$~\cite{FickusMT:12}.
In this spirit, several infinite families of optimal codes have been constructed from combinatorial designs~\cite{JasperMF:14,King:16,FickusMJ:16,FickusJMP:18,FickusJMPW:19,FickusJM:17,FickusJ:19,FickusS:20,FickusIJK:20}.
In some cases, it is even possible to construct optimal codes from smaller codes~\cite{BodmannH:16,Waldron:17,BodmannH:18,King:19b}.
Researchers have also leveraged different computational techniques to find new arrangements~\cite{ApplebyCFW:18,HughesW:18,JasperKM:19} and to study various properties of known arrangements~\cite{CohnW:12,FickusJM:18,FickusJKM:18,MixonP:19,MixonP:19b}.
See~\cite{BannaiB:09,FickusM:15,Waldron:18} for surveys of many of these results.

To date, the vast majority of this work has focused on the special case of projective spaces, and it is easy to explain this trend: it is harder to interact with points in general Grassmannian spaces.
To illustrate this, suppose you are given two tuples $A$ and $B$ of $r$-dimensional subspaces of $\mathbf{R}^d$.
You are told that the subspaces in $A$ were drawn independently and uniformly at random from the Grassmannian $\operatorname{Gr}(r,\mathbf{R}^d)$, and that $B$ was drawn according to one of two processes:
either there exists an orthogonal transformation $g\in\operatorname{O}(d)$ such that $B=g\cdot A$, or $B$ was also drawn independently and uniformly at random.
How can you tell which process was used to construct $B$?
In the special case where $r=1$, the lines are almost surely not orthogonal, and one may leverage this feature to select vector representatives of the lines and then compute a canonical form of the Gramian (i.e., the reduced signature matrix discussed in~\cite{Waldron:18}) that detects whether there exists $g\in\operatorname{O}(d)$ such that $B=g\cdot A$.
However, if $r>1$, it is not obvious how to find such an invariant. On the other hand, we benefit from the fact that many optimal configurations -- like so-called \emph{equiangular tight frames} -- share certain properties -- like not being pairwise orthogonal  -- with generic configurations, allowing results about generic configurations to be applied to certain optimal configurations.

This obstruction has had substantial ramifications on progress toward optimal codes in more general Grassmannian spaces.
In particular, Sloane maintains an online catalog~\cite{Sloane:online} of putatively optimal codes in $\operatorname{Gr}(r,\mathbf{R}^d)$ for $r\in\{1,2,3\}$ and $d\in\{3,\ldots,16\}$.
Suppose one were to find a code for $r\in\{2,3\}$ that is competitive with Sloane's corresponding putatively optimal code.
Are these codes actually the same up to rotation?
If researchers cannot easily answer this question, then they are less inclined to contribute to the hunt for optimal codes in these more general Grassmannian spaces.
\ejk{Alternatively, optimal packings that are different (even up to unitary transformations) might have different structures which may be exploited. For example, different structures that appear in distinct optimal configurations of $9$ points in $\operatorname{Gr}(1,\mathbf{C}^3)$ \cite{H:07} led to progress on Zauner's conjecture in quantum information theory \cite{ApplebyBDF:17,DangBBA:13}, along with infinite classes of optimal packings with nice matroidal structures \cite{BodmannK:18}.}
While there are several works in the literature that treat related problems~\cite{Specht:40,Wiegmann:62,Pearcy:62,KaluzninH:66,Shapiro:91,GrochowQ:23,GrochowQ:24}, the particular problem we identify has yet to be treated.
The primary purpose of this paper is to help close this gap with both theory and code.

Notationally, we let $\mathbf{F}$ denote an arbitrary field.
Every $\Gamma\leq \operatorname{GL}(d,\mathbf{F})$ has a natural action on $\operatorname{Gr}(r,\mathbf{F}^d)$.
Given an involutive automorphism $\sigma$ of $\mathbf{F}$, we consider the Hermitian form defined by $\langle x,y\rangle=\sum_i \sigma(x_i)y_i$, and we let $\operatorname{U}(d,\mathbf{F},\sigma)$ denote the subgroup of all $g\in\operatorname{GL}(d,\mathbf{F})$ such that $\langle gx,gy\rangle=\langle x,y\rangle$ for all $x,y\in\mathbf{F}^d$.
For example, $\operatorname{U}(d,\mathbf{F},\sigma)$ contains all $d\times d$ permutation matrices.  
Over any field, the identity is an involutive automorphism and over quadratic extensions one may choose the only nontrivial field automorphism as the involution (see, e.g., \cite{GreavesIJM20a,GreavesIJM20b}).  Specifically, we also adopt the standard notations for the orthogonal group $\operatorname{O}(d)=\operatorname{U}(d,\mathbf{R},\operatorname{id})$ and the unitary group $\operatorname{U}(d)=\operatorname{U}(d,\mathbf{C},\overline{\phantom{ }\cdot\phantom{ }})$.
For $\mathbf{F}\in\{\mathbf{R},\mathbf{C}\}$, we say that \textbf{generic} points in $\operatorname{Gr}(r,\mathbf{F}^d)$ satisfy property $P$ if there exists an open and dense subset $S\subseteq\mathbf{F}^{d\times r}$ such that for every $A\in S$, it holds that $V:=\operatorname{im}A\in\operatorname{Gr}(r,\mathbf{F}^d)$ and $V$ has property $P$.
\ejk{Throughout this paper, this open and dense subset turns out to be the complement of the zero set of a nonzero real polynomial, and so one may think of genericity in terms of the associated Zariski topology.}

Our problem can be viewed as an instance of a more general, fundamental problem:

\begin{problem}[Common orbit]
Given a $G$-set $X$ and two points $x,y\in X$, determine whether there exists $g\in G$ such that $g\cdot x=y$.
\end{problem}

One attractive approach to solving the common orbit problem is to construct an \textbf{invariant}, that is, a function $f\colon X\to S$ for some set $S$ such that $f(x)=f(y)$ only if there exists $g\in G$ such that $g\cdot x=y$.
In particular, $f(x)$ is determined by the orbit $G\cdot x$.
If $f$ always returns different values for different orbits, then we say $f$ is a \textbf{\ejk{complete} invariant}.
Observe that a \emph{complete} invariant provides a complete solution to common orbit \ejk{(hence the name)}, since one may simply compare $f(x)$ with $f(y)$.

We will study \ejk{three} types of common orbit problems with $X=(\operatorname{Gr}(r,\mathbf{F}^d))^n$.
In particular, for $\Gamma\in\{\operatorname{U}(d,\mathbf{F},\sigma),\operatorname{GL}(d,\mathbf{F})\}$, we consider the following actions on $X$:
\[
G
=\Gamma\times S_n,
~~
(g,\pi)\cdot \lb x_i\rb_{i\in[n]}
= \lb g\cdot x_{\pi^{-1}(i)}\rb_{i\in[n]};
\qquad
G
=\Gamma,
~~
g\cdot \lb x_i\rb_{i\in[n]}
=\lb g \cdot x_i\rb_{i\in[n]}.
\]
Here, $S_n$ denotes the symmetric group on $n$ letters.
In the following section, we show that any solution to common orbit in the case of $G=\Gamma\times S_n$ can be used to solve graph isomorphism, thereby suggesting that this case is computationally hard.
Next, Section~3 treats the case $G=\Gamma\in\{O(d),U(d)\}$.
First, we show how to obtain a canonical choice of Gramian for generic real planes (i.e., points in $\operatorname{Gr}(2,\mathbf{R}^d)$), before finding injective invariants using ideas from the representation theory of $H^*$-algebras. \ejk{We note that the remaining orbit problem with $G=\Gamma=\operatorname{GL}(d,\mathbf{F})$ has been completely solved, not only for $\mathbf{F}\in\{ \mathbf{R},\mathbf{C}\}$ but also for other rings and fields~\cite{BrooksbankL:08,ChristovIK:97,Teodorescu:15,Sergeichuk:00,QiaoS:24}.} Matlab implementations of Algorithm~\ref{alg.rnop}, Algorithm~\ref{alg.fgma}, and Lemma~\ref{lem.alg iso} may be downloaded from~\cite{KingSoftware}.

\section{Isomorphism up to permutation}

An important instance of common orbit is when $G = S_m\times S_n$ acts on $X=\{0,1\}^{m\times n}$ by $(g,h)\cdot x = gxh^{-1}$.
If we restrict $X$ to only include matrices for which each column has exactly two $1$s and no two columns are equal, then $X$ corresponds to the set of incidence matrices of simple graphs on $m$ vertices and $n$ edges, and the common orbit problem corresponds to graph isomorphism:

\begin{problem}[Graph isomorphism]
Given two simple graphs $G$ and $H$, determine whether $G\sim H$, that is, there exists a bijection $f\colon V(G)\to V(H)$ between the vertices that preserves the edges; i.e., for every $u,v\in V(G)$, it holds that $\{u,v\}\in E(G)$ if and only if $\{f(u),f(v)\}\in E(H)$.
\end{problem}

In general, a \textbf{decision problem} is a pair $(P,M)$ where $P$ maps problem instances to answers $P\colon Q\to\{\mathsf{yes},\mathsf{no}\}$ and $M\colon Q\to\mathbf{N}$ measures the size of the problem instance.
For example, for the graph isomorphism problem, $Q$ is the set of all $(G,H)$, where $G$ and $H$ are both simple graphs, $P(G,H)$ returns whether $G$ and $H$ are isomorphic, and if we represent $G$ and $H$ in terms of their incidence matrices, we are inclined to take $M(G,H)=|V(G)||E(G)|+|V(H)||E(H)|$.
We say a decision problem $(P,M)$ is \textbf{GI-hard} if, \ejk{given a black box that computes $P(x)$, one may solve graph isomorphism by an algorithm that uses that black box, and that, outside of that black box, takes time that is at most polynomial in the number of vertices in the input graphs $G$ and $H$.}
For example, if we restrict the input set of graph isomorphism to only consider $(G,H)$ for which $G$ and $H$ are regular graphs, then the resulting subproblem is known to be GI-hard~\cite{ZemlyachenkoKT:85}.
\ejk{Today, the fastest known graph isomorphism algorithm in the worst case has quasipolynomial runtime~\cite{Babai:16}, though faster algorithms are available in practice~\cite{McKayP:14}. Graph isomorphism is one of a few problems that are believed to be NP-intermediate, meaning it is in NP, but not in P, and not NP-complete.}

This section is concerned with two different isomorphism problems between tuples of subspaces.
In both cases, we focus our attention on a discrete set of problem instances.
Given a field $\mathbf{F}$, take $0,1\in\mathbf{F}$ and let $Q(r,d,n,\mathbf{F})$ denote the set of $(A,B)\in (\{0,1\}^{d\times r})^n\times (\{0,1\}^{d\times r})^n$ such that $\operatorname{rank}A_i=\operatorname{rank}B_i=r$ for every $i\in [n]$.
The standard representation of $(A,B)\in Q(r,d,n,\mathbf{F})$ uses $2rdn$ bits.
With this, we may define our decision problems:
\begin{itemize}
\item
${\mathscr{P}_{\operatorname{U}}}(r,\mathbf{F},\sigma) = (P,M)$, where $\mathbf{F}$ is an arbitrary field with involutive automorphism $\sigma$, $Q=\bigcup_{d,n\geq 1}Q(r,d,n,\mathbf{F})$, $P(A,B)$ returns whether there exists $(g,\pi)\in\operatorname{U}(d,\mathbf{F},\sigma)\times S_n$ such that $(g,\pi)\cdot\lb\operatorname{im}A_i\rb_{i\in[n]}=\lb\operatorname{im}B_i\rb_{i\in[n]}$, where $d=d(A,B)$ and $n=n(A,B)$, and $M(A,B)=2\cdot r\cdot d(A,B)\cdot n(A,B)$.
\item
${\mathscr{P}_{\operatorname{GL}}}(\mathbf{F}) = (P,M)$, where $\mathbf{F}$ is an arbitrary field, $Q=\bigcup_{r,d,n\geq 1}Q(r,d,n,\mathbf{F})$, $P(A,B)$ returns whether there exists $(g,\pi)\in\operatorname{GL}(d,\mathbf{F})\times S_n$ such that $(g,\pi)\cdot\lb\operatorname{im}A_i\rb_{i\in[n]}=\lb\operatorname{im}B_i\rb_{i\in[n]}$, where $d=d(A,B)$ and $n=n(A,B)$, and $M(A,B)=2\cdot r(A,B)\cdot d(A,B)\cdot n(A,B)$.
\end{itemize}
In words, ${\mathscr{P}_{\operatorname{U}}}(r,\mathbf{F},\sigma)$ concerns isomorphism up to unitary and permutation for any fixed rank $r$, whereas ${\mathscr{P}_{\operatorname{GL}}}(\mathbf{F})$ concerns isomorphism up to linear automorphism and permutation, but with the rank no longer fixed.
As we will see, both problems are hard.
\ejk{Prior work studied the special case where $r=1$. For work relating to ${\mathscr{P}_{\operatorname{U}}}(1,\mathbf{R},\sigma)$, see \cite{ColbournC:80}. The fact that $r$ is fixed for one problem and not for the other is an artifact of our proof of hardness. In particular, by fixing $r=1$ rather than all $r$, one may consider a problem denoted as $\mathscr{P}_{\operatorname{GL}}(1,\bF)$ for arbitrary fields $\bF$.  This problem is known as monomial code equivalence and is related to permutational code equivalence, which are both known to be GI-hard~\cite{BennettHW:25,Grochow:12,PetrankR:97}. This might suggest that ${\mathscr{P}_{\operatorname{GL}}}(r,\mathbf{F})$ is GI-hard for each $r$.  However, our proof technique relies on a reduction to regular graph isomorphism in which $r$ is determined by the graph degree; n.b.\ bounded degree GI is in P, while GI is not known to be~\cite{Luks:82}}.

\begin{theorem}\label{thm:GI}\
\ejk{The following problems are GI-hard:
\begin{itemize}
\item[(a)]
${\mathscr{P}_{\operatorname{U}}}(r,\mathbf{F},\sigma)$ for every $r\in\mathbf{N}$ and every field $\mathbf{F}$ with involutive automorphism $\sigma$ and
\item[(b)]
${\mathscr{P}_{\operatorname{GL}}}(\mathbf{F})$ for every field $\mathbf{F}$.
\end{itemize}}
\end{theorem}

\begin{proof}
(a)
Fix $r$, $\mathbf{F}$ and $\sigma$.
We will use a ${\mathscr{P}_{\operatorname{U}}}(r,\mathbf{F},\sigma)$ oracle to efficiently solve graph isomorphism.
Given two simple graphs $G$ and $H$, we return $\mathsf{no}$ if $V(G)$ and $V(H)$ are of different size, or if $E(G)$ and $E(H)$ are of different size.
Otherwise, put $n:=|V(G)|$, $e=|E(G)|$ and $d:=re$, and for each graph, arbitrarily label the vertices and edges with members of $[n]$ and $[e]$, respectively.
We use this labeling of $G$ to determine $A$.
Specifically, for each $j\in[n]$, define $A_j\in\mathbf{F}^{d\times r}$ to consist of $e$ blocks of size $r\times r$, where for each $i\in[e]$, the $i$th block of $A_j$ is $I_r$ if $j$ is a vertex in edge $i$, and otherwise the block is zero.
Define $B$ similarly in terms of our labeling of $H$.
Given $(A,B)$, the ${\mathscr{P}_{\operatorname{U}}}(r,\mathbf{F},\sigma)$ oracle returns whether there exists $(g,\pi)\in\operatorname{U}(d,\mathbf{F},\sigma)\times S_n$ such that $(g,\pi)\cdot\lb\operatorname{im}A_i\rb_{i\in[n]}=\lb\operatorname{im}B_i\rb_{i\in[n]}$, and we will output this answer as our solution to graph isomorphism.
It remains to show that $G\sim H$ if and only if there exists $(g,\pi)\in\operatorname{U}(d,\mathbf{F},\sigma)\times S_n$ such that $(g,\pi)\cdot\lb\operatorname{im}A_i\rb_{i\in[n]}=\lb\operatorname{im}B_i\rb_{i\in[n]}$.
For ($\Rightarrow$), observe that a graph isomorphism determines a choice of $\pi\in S_n$ as well as a permutation of edges.
This permutation of edges can be implemented as a block permutation matrix $g\in\operatorname{U}(d,\mathbf{F},\sigma)$ so that $gA_{\pi^{-1}(i)}=B_i$, which then implies $(g,\pi)\cdot\lb\operatorname{im}A_i\rb_{i\in[n]}=\lb\operatorname{im}B_i\rb_{i\in[n]}$.
For ($\Leftarrow$), we first define two additional graphs $G'$ and $H'$, both on vertex set $[n]$.
For $G'$, say $i\leftrightarrow j$ if there exists $x\in\operatorname{im}A_i$ and $y\in\operatorname{im}A_j$ such that $\langle x,y\rangle\neq 0$.
Define $H'$ similarly in terms of $B$.
By our construction of $A$ and $B$, it holds that $G'\sim G$ and $H'\sim H$.
Furthermore, the existence of $(g,\pi)\in\operatorname{U}(d,\mathbf{F},\sigma)\times S_n$ such that $(g,\pi)\cdot\lb\operatorname{im}A_i\rb_{i\in[n]}=\lb\operatorname{im}B_i\rb_{i\in[n]}$ implies that $G'\sim H'$, meaning $G\sim H$, as desired.

(b)
Fix $\mathbf{F}$.
We will use a ${\mathscr{P}_{\operatorname{GL}}}(\mathbf{F})$ oracle to efficiently solve graph isomorphism for regular graphs, which suffices by~\cite{ZemlyachenkoKT:85}.
Without loss of generality, we may put $n:=|V(G)|=|V(H)|$, $d:=|E(G)|=|E(H)|$, and let $r$ denote the common degree of $G$ and $H$.
For each graph, arbitrarily label the vertices and edges with members of $[n]$ and $[d]$, respectively, and let $\lb e_i\rb_{i\in[d]}$ denote the identity basis in $\mathbf{F}^d$.
For each $j\in[n]$, select $A_j\in\mathbf{F}^{d\times r}$ so that its column vectors are the $r$ members of $\lb e_i\rb_{i\in[d]}$ that correspond to edges $i$ incident to vertex $j$.
Define $B$ similarly in terms of our labeling of $H$.
Given $(A,B)$, the ${\mathscr{P}_{\operatorname{GL}}}(\mathbf{F})$ oracle returns whether there exists $(g,\pi)\in\operatorname{GL}(d,\mathbf{F})\times S_n$ such that $(g,\pi)\cdot\lb\operatorname{im}A_i\rb_{i\in[n]}=\lb\operatorname{im}B_i\rb_{i\in[n]}$, and we will output this answer as our solution to graph isomorphism.
It remains to show that $G\sim H$ if and only if there exists $(g,\pi)\in\operatorname{GL}(d,\mathbf{F})\times S_n$ such that $(g,\pi)\cdot\lb\operatorname{im}A_i\rb_{i\in[n]}=\lb\operatorname{im}B_i\rb_{i\in[n]}$.
For ($\Rightarrow$), the isomorphism determines a permutation matrix $g\in\operatorname{GL}(d,\mathbf{F})$ and a permutation $\pi\in S_n$ such that such that $(g,\pi)\cdot\lb\operatorname{im}A_i\rb_{i\in[n]}=\lb\operatorname{im}B_i\rb_{i\in[n]}$.
For ($\Leftarrow$), we first define two additional graphs $G'$ and $H'$, both on vertex set $[n]$.
For $G'$, say $i\leftrightarrow j$ if $\operatorname{im}A_i\cap\operatorname{im}A_j\neq\{0\}$, and define $H'$ similarly in terms of $B$.
By our construction of $A$ and $B$, it holds that $G'\sim G$ and $H'\sim H$.
Furthermore, the existence of $(g,\pi)\in\operatorname{GL}(d,\mathbf{F})\times S_n$ such that $(g,\pi)\cdot\lb\operatorname{im}A_i\rb_{i\in[n]}=\lb\operatorname{im}B_i\rb_{i\in[n]}$ implies that $G'\sim H'$, meaning $G\sim H$, as desired.
\end{proof}

Of course, Theorem~\ref{thm:GI} does not mean that solving ${\mathscr{P}_{\operatorname{U}}}(r,\mathbf{F},\sigma)$ or ${\mathscr{P}_{\operatorname{GL}}}(\mathbf{F})$ is always hopeless. \ejk{(In particular, graph isomorphism is solvable in practice~\cite{McKayP:14}.)} As an example, the Bargmann invariants computed in Section~\ref{subsec:lines} are ordered lists of numbers; if the histograms of these numbers are not equal, then the lines cannot be isomorphic up to permutation.

\section{Isomorphism up to linear isometry}

While the previous section demonstrated that certain isomorphism problems are hard, this section will show that isomorphism up to linear isometry is relatively easy.
This would have taken Halmos by surprise, as he considered this problem to be difficult even for triples of subspaces~\cite{Halmos:70}.
Throughout this section, we assume $\mathbf{F}\in\{\mathbf{R},\mathbf{C}\}$ without mention, meaning $\operatorname{U}(d,\mathbf{F},\sigma)\in\{\operatorname{O}(d),\operatorname{U}(d)\}$.

\subsection{Lines}\label{subsec:lines}

Chien and Waldron~\cite{ChienW:16} provide \ejk{a complete} invariant for tuples of lines in $\mathbf{F}^d$ up to isometric isomorphism.
Given a tuple $\lb v_i\rb_{i\in[n]}$ of unit vectors in $\mathbf{F}^d$ that span each line in the tuple $\mathscr{L}=\lb\ell_i\rb_{i\in[n]}$, define\footnote{Our definition differs slightly from~\cite{ChienW:16} since our inner product is conjugate-linear in the first argument.} the \textbf{$m$-vertex Bargmann invariants} or \textbf{$m$-products} by
\[
\Delta(v_{i_1},\ldots,v_{i_m})
:=\langle v_{i_1},v_{i_2}\rangle\langle v_{i_2},v_{i_3}\rangle\cdots\langle v_{i_m},v_{i_1}\rangle,
\qquad
i_1,\ldots,i_m\in[n].
\]
Denoting $P_i:=v_iv_i^*$, we see that $\Delta(v_{i_1},\ldots,v_{i_m})=\operatorname{tr}(P_{i_1}\cdots P_{i_m})$, and so the choice of $v_i\in\ell_i$ is irrelevant.
Furthermore, as their name suggests, these quantities are invariant to isometric isomorphism, since for $Q\in\operatorname{U}(d,\mathbf{F},\sigma)$, the orthogonal projection onto $Q\cdot\ell_i$ is $QP_iQ^*$, and $\operatorname{tr}(QP_{i_1}Q^*\cdots QP_{i_m}Q^*)=\operatorname{tr}(P_{i_1}\cdots P_{i_m})$.

\ejk{Let's take a moment to discuss the relationship to classical invariant theory.
In the special case where $\mathbf{F}=\mathbf{R}$, we are interested in the orbit of $\lb v_i\rb_{i\in[n]}\in(\mathbf{R}^d)^n$ under the action of $\operatorname{O}(d)\times\operatorname{O}(1)^n$.
Any polynomial that is invariant to this group is invariant to the subgroup $\operatorname{O}(d)$, and is therefore a polynomial of $\lb x_{ij}:=\langle v_i,v_j\rangle\rb_{1\leq i\leq j\leq n}$ by the first fundamental theorem of invariant theory for the orthogonal group.
Next, if we apply the Reynolds operator of $\operatorname{O}(1)^n$ to any monic monomial of the $x_{ij}$'s, the result is either zero or the same monomial, with the later case occurring precisely when the multiset of indices $ij$ that appear in the monomial form the edges of a (not necessarily simple) graph with vertex set $[n]$ in which every vertex has even degree.
Since every such graph can be decomposed into cycles, it follows that the $m$-products with $m\in[n]$ together generate the algebra of polynomial invariants, which in turn separates the orbits.
As we discuss below, Chien and Waldron~\cite{ChienW:16} identify a much smaller subset of $m$-products separate these orbits.}

Given the 2-products, one may define the \textbf{frame graph} $G(\mathscr{L})$ on $[n]$ in which we draw an edge $i\leftrightarrow j$ when $\ell_i$ and $\ell_j$ are not orthogonal; we note that the frame graph has also been referred to as the \textit{correlation network}~\cite{Strawn:07}.
Letting $E$ denote the edge set of the frame graph, then the indicator functions of the edge sets of Eulerian subgraphs of $G(\mathscr{L})$ form a subspace $\mathscr{E}\subseteq \mathbf{F}_2^E$.
Given a maximal spanning forest $F$ of $G(\mathscr{L})$, then each edge in $E\setminus E(F)$ completes a unique cycle with this forest, and the indicator functions of the edge sets of these cycles form a basis for $\mathscr{E}$.
Let $C(F)$ denote the set of these cycles.
With these notions, we may enunciate the main result of~\cite{ChienW:16} (for unweighted lines).

\begin{proposition}[Corollary~3.2 in~\cite{ChienW:16}, cf.\ Theorem~2 in~\cite{GallagherP:77}]
\label{prop.chien-waldron}
Given a tuple $\mathscr{L}$ of lines in $\mathbf{F}^d$, select any maximal spanning forest $F$ of the frame graph $G(\mathscr{L})$.
Then $\mathscr{L}$ is determined up to isometric isomorphism by its $2$-products and each $m$-product corresponding to a cycle in $C(F)$.
\end{proposition}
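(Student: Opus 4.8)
The plan is to recover the full tuple of projections $(P_i)_{i\in[n]}$ — hence $\mathscr{L}$ — up to a single unitary conjugation $Q\in\operatorname{U}(d,\mathbf{F},\sigma)$ from the stated data, by fixing a gauge on the vector representatives $v_i$. The key observation is that the $2$-products $\langle v_i,v_j\rangle\overline{\langle v_i,v_j\rangle}=|\langle v_i,v_j\rangle|^2=\operatorname{tr}(P_iP_j)$ tell us exactly which pairs are orthogonal, so they determine the frame graph $G(\mathscr{L})$ and therefore also $F$ and $C(F)$ as combinatorial objects. Since any isometric isomorphism preserves all $m$-products, it suffices to prove the converse: if two tuples $\mathscr{L}$, $\mathscr{L}'$ have the same frame graph, the same $2$-products, and the same $m$-products along every cycle in $C(F)$, then they are isometrically isomorphic.

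First I would handle each connected component of $G(\mathscr{L})$ separately, since the span of the lines in distinct components can be taken to lie in orthogonal subspaces (adjusting the ambient dimension if necessary), so a direct sum of partial isometries assembles the global $Q$. Fix a component with vertex set $S$ and spanning tree $F|_S$ rooted at some $r_0\in S$. I would choose unit vectors $v_i$ by a gauge-fixing procedure: for each tree edge $i\leftrightarrow j$ with $j$ the child, rescale $v_j$ by a unimodular scalar so that $\langle v_i,v_j\rangle$ is real and positive. Walking down the tree from the root, this uniquely pins down each $v_i$ (given $v_{r_0}$) up to the overall phase of $v_{r_0}$, which is harmless. Now the claim is that in this gauge every inner product $\langle v_i,v_j\rangle$ is a function of the given data. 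For a tree edge this is immediate: its modulus is $\sqrt{\operatorname{tr}(P_iP_j)}$ and its phase is $0$ by construction. For a non-tree edge $e=\{i,j\}\in E\setminus E(F)$, the modulus is again $\sqrt{\operatorname{tr}(P_iP_j)}$, and its phase is determined because $e$ closes a unique cycle $c\in C(F)$ whose $m$-product $\Delta$ is the product of $\langle v_i,v_j\rangle$ against inner products along tree edges (which are real positive) — so the argument of $\langle v_i,v_j\rangle$ equals the argument of the known scalar $\Delta$ (with the appropriate orientation). Hence the entire Gramian $M=(\langle v_i,v_j\rangle)_{i,j\in S}$ is determined by the data.

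Finally I would invoke the standard fact that a tuple of unit vectors in $\mathbf{F}^d$ is determined up to a unitary in $\operatorname{U}(d,\mathbf{F},\sigma)$ by its Gramian (when the Gramian has rank at most $d$, which holds here since the vectors live in $\mathbf{F}^d$): if $\mathscr{L}'$ is a second tuple with the same frame graph and the same listed invariants, then applying the same gauge-fixing to its representatives $v_i'$ yields a Gramian equal to $M$, so there exists $Q$ with $Qv_i=v_i'$ for all $i$, whence $Q\cdot\ell_i=\ell_i'$. The main obstacle — and the point requiring care — is the bookkeeping in the gauge-fixing argument for non-tree edges: one must check that the gauge can be consistently propagated down the whole tree (no conflict arises precisely because $F$ is a forest, so each vertex has a unique path to its root), and that the orientation/conjugation conventions in the definition of $\Delta$ are tracked correctly so that the phase of each chord inner product is genuinely pinned down rather than merely constrained to a coset. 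Once that is in place, everything else is routine linear algebra.
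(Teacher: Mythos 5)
Your proposal is correct and follows essentially the same route as the paper's proof: both fix phases inductively along the spanning forest, use the $2$-products for the moduli, use the cycle $m$-products (whose tree-edge factors are already pinned down) to determine the phases of the chord inner products, and then invoke the fact that the Gramian determines a tuple of vectors up to an element of $\operatorname{U}(d,\mathbf{F},\sigma)$. The only difference is presentational --- you normalize both Gramians to a canonical gauge, whereas the paper exhibits unimodular scalars $\eta_i$ relating the two Gramians directly --- and these are equivalent.
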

\begin{proof}
Let $\cL$ and $\cL'$ be $n$-tuples of lines in $\mathbf{F}^d$.  As noted above, if $\cL$ and $\cL'$ are isometrically isomorphic, then all of their $m$-products must be equal.

For the other direction, select an $n$-tuple of unit vectors $\{v_i\}_{i=1}^n$ in $\bF^d$ that span the lines in $\cL$.  Let $\{u_i\}_{i=1}^n$ be another $n$-tuple of unit vectors in $\bF^d$ such that the $2$-products and each $m$-product corresponding to a cycle in $C(F)$ (corresponding to a spanning forest $F$ of $\cL$) of each tuple of vectors are equal. We would like to show that $\{v_i\}_{i=1}^n$ and $\{u_i\}_{i=1}^n$ are the same modulo $\operatorname{U}(d,\mathbf{F},\sigma)$ and choice of basis vectors. Since the spectral theorem implies tuples of vectors are the same modulo $\operatorname{U}(d,\mathbf{F},\sigma)$ if and only if their Gramians are component-wise equal, it suffices to show that there exist unimodular $\eta_i$ for $i\in [n]$ such that for all $i,j \in [n]$
\beq\label{eqn:baseIP}
\ip{u_i}{u_j} = \overline{\eta_i}\eta_j \ip{v_i}{v_j}. 
\eeq
If $i$ and $j$ are in different components of $G(\mathscr{L})$ (where we are using $i$ as shorthand for $\ell_i$),  then $\ip{v_i}{v_j}=0$ and~\eqref{eqn:baseIP} yields no restriction on the values of $\eta_i$ and $\eta_j$.  Thus, we may assume without loss of generality that  $G(\mathscr{L})$ is connected and $F$ is a spanning tree with root $r$. Since $2$-products are equal,
\[
\absip{u_i}{u_j}^2 = \ip{u_i}{u_j}\ip{u_j}{u_i}= \ip{v_i}{v_j}\ip{v_j}{v_i}=\absip{v_i}{v_j}^2
\]
for all $i,\, j \in [n]$.  For $i \in [n]$ such that $ri$ is an edge in $F$, let $\eta_i$ be the necessarily unimodular scalar such that $\ip{u_r}{u_i}=\eta_i \ip{v_r}{v_i}$. Now for $j \in [n]$ such that $ri$ and $ij$ are edges in $F$ but not $rj$ let $\eta_j$ be the necessarily unimodular scalar such that~\eqref{eqn:baseIP} holds.  Continue this process inductively, setting the $\eta_k$ for vertices $k$ at distance $3, 4, \hdots$ from $r$. Since $F$ is spanning, we have uniquely defined $\eta_i$ for each $i \in [n]$. However, we now need to verify that~\eqref{eqn:baseIP} holds for any $ij$ that is an edge in $G(\mathscr{L})$ but not $F$.  Let $ij$ be such an edge; it lies in a unique cycle in $C(F)$, say with vertex sequence $i, j, k_3, k_4, \hdots, k_m, i$. Since each edge but $ij$ lies in $F$,
\begin{align*}
\ip{v_i}{v_j}\ip{v_j}{v_{k_3}}\ip{v_{k_3}}{v_{k_4}} \cdots \ip{v_{k_m}}{v_{i}} &=
\ip{u_i}{u_j}\ip{u_j}{u_{k_3}}\ip{u_{k_3}}{u_{k_4}} \cdots \ip{u_{k_m}}{u_{i}} \\
&=\ip{u_i}{u_j}  \overline{\eta_j}\eta_{k_3}  \ip{v_j}{v_{k_3}} \overline{\eta_{k_3}}\eta_{k_4}\ip{v_{k_3}}{v_{k_4}} \cdots \overline{\eta_{k_m}}\eta_i \ip{v_{k_m}}{v_{i}} \\
&= \eta_i  \overline{\eta_j}\ip{u_i}{u_j} \ip{v_j}{v_{k_3}}\ip{v_{k_3}}{v_{k_4}} \cdots \ip{v_{k_m}}{v_{i}},
\end{align*}
implying that~\eqref{eqn:baseIP} holds for $ij$, as desired.
\end{proof}
Generically (or for equiangular tight frames and certain other optimal configurations), none of the inner products $\langle v_i,v_j\rangle$ equal zero.
In this case, the frame graph is complete, and so we may take $F$ to be the star graph in which $1\leftrightarrow j$ for every $j\neq 1$.
Then $C(F)$ consists of all triangles in $K_n$ that have $1$ as a vertex.
Alternatively, we can put the Gramian $A=\lb\langle v_i,v_j\rangle\rb_{i,j\in[n]}$ in a canonical form by taking $D=\operatorname{diag}(\operatorname{sgn}\langle v_1,v_1\rangle,\ldots,\operatorname{sgn}\langle v_1,v_n\rangle)$ and $G=DAD^*$.
Here, $\operatorname{sgn}(re^{i\theta})=e^{i\theta}$, and so $DAD^*$ has all positive entries in its first row and column.
We refer to $G$ as the \textbf{normalized Gramian} of $A$.
Since the Gramian of $\lb v_i\rb_{i\in[n]}$ is invariant to isometries acting on $\lb v_i\rb_{i\in[n]}$, normalizing the Gramian removes any ambiguity introduced by selecting $v_i\in\ell_i$, and so the normalized Gramian is a generically injective invariant for $(\operatorname{Gr}(1,\mathbf{F}^d))^n$ modulo $\operatorname{U}(d,\mathbf{F},\sigma)$.
Notice that the entries of $G$ are the triple products corresponding to $C(F)$, and so this conclusion may also be viewed in terms of Proposition~\ref{prop.chien-waldron}.

At this point, we can treat the case of lines from two related but different perspectives:
Generically (and for certain optimal configurations), it suffices to compute the normalized Gramian, but in general, we must appeal to more intricate Bargmann invariants.
In what follows, we will see that a similar story holds for general subspaces.

\subsection{Real, nowhere orthogonal planes}

We say two subspaces $U,V\subseteq\mathbf{F}^d$ are \textbf{nowhere orthogonal} if $U\cap V^\perp=U^\perp\cap V=\{0\}$.
By counting dimensions, one may conclude that subspaces are nowhere orthogonal only if they have the same dimension.
Given bases $\lb u_i\rb_{i\in[r]}$ and $\lb v_i\rb_{i\in[r]}$ for $U$ and $V$, nowhere orthogonality is equivalent to the cross Gramian $\lb\langle u_i,v_j\rangle\rb_{i,j\in[r]}$ being invertible.
As one might expect, nowhere orthogonality is a generic property of subspaces of common dimension; we provide a short proof in the real case:

\begin{lemma}
Two generic $r$-dimensional subspaces of $\mathbf{R}^d$ are nowhere orthogonal.
\end{lemma}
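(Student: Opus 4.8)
The plan is to exhibit a single pair of $r$-dimensional subspaces of $\mathbf{R}^d$ that is nowhere orthogonal, and then argue that nowhere orthogonality is an open condition, so that once one point of the parameter space works, a whole open and dense subset works. Concretely, I would parametrize pairs of $r$-dimensional subspaces by pairs of full-rank matrices $(U,V)\in\mathbf{R}^{d\times r}\times\mathbf{R}^{d\times r}$, with the understanding that a pair represents $(\operatorname{im}U,\operatorname{im}V)$. By the characterization recalled just before the lemma, the pair $(\operatorname{im}U,\operatorname{im}V)$ is nowhere orthogonal if and only if the cross Gramian $U^\top V\in\mathbf{R}^{r\times r}$ is invertible, i.e.\ $\det(U^\top V)\neq 0$.

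The key steps, in order: First, observe that $(U,V)\mapsto\det(U^\top V)$ is a polynomial in the entries of $U$ and $V$, hence its nonvanishing locus is Zariski-open, and in particular open in the Euclidean topology. Second, a polynomial on $\mathbf{R}^{d\times r}\times\mathbf{R}^{d\times r}$ is either identically zero or nonzero on an open dense set; so it suffices to find one pair $(U_0,V_0)$ of full-rank matrices with $\det(U_0^\top V_0)\neq 0$. Take $U_0=V_0=\begin{bmatrix}I_r\\0\end{bmatrix}$, the first $r$ columns of $I_d$ (here $r\le d$, which is implicit since $\operatorname{Gr}(r,\mathbf{R}^d)$ is nonempty); then $U_0^\top V_0=I_r$, which is invertible, and both matrices have rank $r$. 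Third, conclude: the set $S:=\{(U,V):\operatorname{rank}U=\operatorname{rank}V=r,\ \det(U^\top V)\neq 0\}$ is open and dense in $\mathbf{R}^{d\times r}\times\mathbf{R}^{d\times r}$ (the rank conditions themselves cut out an open dense set, and intersecting with the open dense nonvanishing locus of $\det(U^\top V)$ preserves this). Every pair in $S$ yields a nowhere orthogonal pair of $r$-dimensional subspaces, which is exactly the genericity statement in the sense defined in the introduction (applied to the product Grassmannian, or separately to each factor after noting the product structure of $S$).

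I do not expect any serious obstacle here; the only point requiring a little care is matching the conclusion to the precise definition of ``generic'' given in the excerpt, which quantifies over an open dense subset of $\mathbf{F}^{d\times r}$ rather than of a product. One clean way to handle this is to note that openness and density are preserved under finite products, so an open dense $S\subseteq(\mathbf{R}^{d\times r})^2$ restricts, on a generic slice, to an open dense subset in each coordinate; alternatively one simply states and proves the product version directly, since that is what the later algorithmic results actually use. A secondary minor point is the fact that the vanishing locus of a nonzero real polynomial has empty interior (equivalently, $\mathbf{R}^N$ is irreducible as a real algebraic set for this purpose), which is standard: restrict the polynomial to a generic line to reduce to the one-variable case, where a nonzero polynomial has finitely many roots.
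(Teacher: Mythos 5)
Your proposal is essentially identical to the paper's proof: both reduce nowhere orthogonality to the nonvanishing of the polynomial $(A_1,A_2)\mapsto\det(A_1^* A_2)$, witness nonvanishing at $A_1=A_2=[I_r;0]$, and conclude that the nonvanishing locus is open and dense. The extra care you take in reconciling the product parametrization with the paper's single-factor definition of \emph{generic} is a reasonable addition but does not change the argument.
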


\begin{proof}
Given $A_1,A_2\in\mathbf{R}^{d\times r}$, then $\operatorname{im}A_1$ and $\operatorname{im}A_2$ are nowhere orthogonal subspaces of dimension $r$ if and only if $f(A_1,A_2):=\det(A_1^* A_2)\neq0$.
Since the polynomial $f$ is nonzero at $A_1=A_2=[I_r;0]$, it follows that $f\neq0$, and so $f^{-1}(\mathbf{R}\setminus\{0\})$ is a generic set, as desired.
\end{proof}

In this section, we consider the special case of nowhere orthogonal $2$-dimensional subspaces of $\mathbf{R}^d$.
This case is particularly relevant to the study of real equi-isoclinic planes, which have received some attention recently~\cite{Et-Taoui:00,Et-Taoui:06,Et-Taoui:07,Et-Taoui:18,King:19b}.
In general, subspaces are said to be \textbf{equi-isoclinic} if there exists $\theta>0$ such that every principal angle between any two of the subspaces equals $\theta$.
(Note that equi-isoclinic subspaces with $\theta<\frac{\pi}{2}$ are nowhere orthogonal.)
Such subspaces were introduced by Lemmens and Seidel~\cite{LemmensS:73}, and at times, they emerge as arrangements of points in the Grassmannian that maximize the minimum chordal distance~\cite{DhillonHST:08}.
In fact, most of Sloane's chordal-distance codes of real planes~\cite{Sloane:online} are nowhere orthogonal, and well over half have the property that all cross Gramians have a minimum singular value greater than $10^{-4}$.

In what follows, we obtain a normalized Gramian for real, nowhere orthogonal planes, and to do so, we exploit several features of this special case.
For example, the singular values of a cross Gramian between two planes are either all equal or all distinct.
We will also leverage consequences of the fact that $\operatorname{SO}(2)$ is abelian:

\begin{lemma}
\label{lem.so moves}
If $A\in\operatorname{SO}(2)$ and $B\in\operatorname{O}(2)$, then $A^{-1}=[\begin{smallmatrix}1 & 0\\ 0 & -1\end{smallmatrix}]~A~[\begin{smallmatrix}1 & 0\\ 0 & -1\end{smallmatrix}]$ and $AB=BA^{\det B}$.
\end{lemma}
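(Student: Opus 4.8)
The plan is to prove both identities by direct computation using the standard parametrization of $\operatorname{SO}(2)$ and the fact that $\operatorname{O}(2)\setminus\operatorname{SO}(2)$ consists exactly of reflections. Write $A=R_\theta:=\bigl[\begin{smallmatrix}\cos\theta & -\sin\theta\\ \sin\theta & \cos\theta\end{smallmatrix}\bigr]$ and set $J:=\bigl[\begin{smallmatrix}1 & 0\\ 0 & -1\end{smallmatrix}\bigr]$, so that $J^2=I$ and $J=J^{-1}=J^*$.

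For the first identity, I would simply note that $A^{-1}=R_{-\theta}=\bigl[\begin{smallmatrix}\cos\theta & \sin\theta\\ -\sin\theta & \cos\theta\end{smallmatrix}\bigr]$, while conjugating $A$ by $J$ flips the sign of the off-diagonal entries, which is the same matrix. Equivalently, $JAJ$ has matrix entries $(JAJ)_{k\ell}=(-1)^{k-\ell}A_{k\ell}$, negating exactly the $(1,2)$ and $(2,1)$ entries. This gives $A^{-1}=JAJ$, i.e. the stated relation since $J^{-1}=J$.

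For the second identity, split into the two cases $\det B=1$ and $\det B=-1$. If $\det B=1$, then $B\in\operatorname{SO}(2)$, and since $\operatorname{SO}(2)$ is abelian we have $AB=BA=BA^{\det B}$. If $\det B=-1$, then $B$ is a reflection, so $B$ is symmetric with $B^2=I$, hence $B^{-1}=B$; moreover every reflection can be written $B=R_\varphi J$ for some $\varphi$ (or one can argue abstractly that $BAB^{-1}$ lies in $\operatorname{SO}(2)$, is a rotation by $-\theta$ because conjugation by an orientation-reversing isometry reverses the angle — this is where the first identity does the work, since any reflection is $R_\varphi J$ and $R_\varphi$ commutes with $A$, so $BAB^{-1}=R_\varphi (JAJ) R_{-\varphi}=R_\varphi A^{-1}R_{-\varphi}=A^{-1}$). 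Thus $BAB^{-1}=A^{-1}=A^{\det B}$, which rearranges to $AB=BA^{\det B}$ upon right-multiplying by $B=B^{-1}$.

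There is no real obstacle here; the only thing to be slightly careful about is organizing the $\det B=-1$ case cleanly, and the cleanest route is to reduce it to the first identity by writing an arbitrary reflection as $R_\varphi J$ and using that $R_\varphi\in\operatorname{SO}(2)$ commutes with $A$. This makes the second identity a one-line consequence of the first together with commutativity of $\operatorname{SO}(2)$, so the two parts of the lemma are genuinely the same fact. I would present the argument in that order: prove $A^{-1}=JAJ$ by inspection of entries, then deduce $AB=BA^{\det B}$ by the case split, invoking abelianness for $\det B=1$ and the just-proved identity for $\det B=-1$.
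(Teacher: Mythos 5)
Your proof is correct and takes essentially the same route as the paper: the first identity by inspecting how conjugation by $\operatorname{diag}(1,-1)$ negates the off-diagonal entries of a rotation, and the second by splitting on $\det B$, using abelianness of $\operatorname{SO}(2)$ when $\det B=1$ and, when $\det B=-1$, factoring the reflection as a rotation times $\operatorname{diag}(1,-1)$ (your $B=R_\varphi J$ is exactly the paper's $C=BR$) and invoking the first identity. The only nitpick is the final rearrangement: from $BAB^{-1}=A^{-1}$ one should apply the identity to $A^{-1}$ (or left-multiply by $A$ and right-multiply by $A^{-1}$) to land on $AB=BA^{-1}$, but this is immediate.
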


\begin{proof}
The first claim follows from the fact that $[\begin{smallmatrix} c & -s\\ s & c\end{smallmatrix}]^{-1}=[\begin{smallmatrix} c & s\\ -s & c\end{smallmatrix}]$ when $c^2+s^2=1$.
For the second claim, if $\det B=1$, then since $\operatorname{SO}(2)$ is abelian, we have $AB=BA$.
If $\det B=-1$, then put $R=[\begin{smallmatrix}1 & 0\\ 0 & -1\end{smallmatrix}]$ and $C=BR$.
Then $C\in\operatorname{SO}(2)$, and so the first claim gives
\[
AB
=ABRR
=ACR
=CAR
=CRRAR
=CRA^{-1}
=BA^{-1}.
\qedhere
\]
\end{proof}

\begin{algorithm}[t]
\SetAlgoLined
\KwData{Gramian $A\in(\mathbf{R}^{2\times2})^{n\times n}$ of orthobases of $n$ nowhere orthogonal planes in $\mathbf{R}^d$}
\KwResult{Gramian $G\in(\mathbf{R}^{2\times2})^{n\times n}$ of another choice of orthobases}
\medskip

Put $R=[\begin{smallmatrix}1 & 0\\ 0 & -1\end{smallmatrix}]$ and $S=\operatorname{diag}(R,\ldots,R)$\\
\eIf{there exists $(k,l)$ such that $A_{kl}$ has distinct singular values}{
Let $(k,l)$ be the first such indices, lexicographically\\
Compute the singular value decomposition $A_{kl}=W_{k}\Sigma V^*$ and put $\widetilde{W}_k=W_kR$\\
For $j\neq k$, compute polar decompositions $W_k^*A_{kj}=P_jW_j^*$ and $\widetilde{W}_k^*A_{kj}=\widetilde{P}_j\widetilde{W}_j^*$\\
Put $D=\operatorname{diag}(W_1,\ldots,W_n)$ and $\widetilde{D}=\operatorname{diag}(\widetilde{W}_1,\ldots,\widetilde{W}_n)$\\
Put $G=\min(D^*AD,\widetilde{D}^*A\widetilde{D})$, lexicographically
}{
For each $(i,j)$, find $\alpha_{ij}>0$ such that $H_{ij}:=\alpha_{ij}A_{ij}\in\operatorname{O}(2)$\\
Put $H=\lb H_{ij}\rb_{i,j\in[n]}$ and $D=\operatorname{diag}(H_{11},\ldots,H_{1n})$\\
\eIf{there exists $(k,l)$ such that $\det(DHD^*)_{kl}=-1$}{
Let $(k,l)$ be the first such indices, lexicographically\\
Put $Q=((DHD^*)_{kl}R)^{-1/2}$ \hfill \textit{// either square root may be selected}\\
Put $E=\operatorname{diag}(QH_{11},\ldots,QH_{1n})$\\
Put $G=\min(EAE^*,SEAE^*S)$, lexicographically\\
}{
Put $G=\min(DAD^*,SDAD^*S)$, lexicographically\\
}
}
\caption{Canonical Gramian between real, nowhere orthogonal planes
 \label{alg.rnop}}
\end{algorithm}

\begin{theorem}
The function implemented by Algorithm~\ref{alg.rnop} is \ejk{a complete} invariant for nowhere orthogonal tuples in $(\operatorname{Gr}(2,\mathbf{R}^d))^n$ modulo $\operatorname{O}(d)$.
\end{theorem}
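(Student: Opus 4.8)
The plan is to recast the statement as a canonical-form result for block Gramians and then verify it branch by branch. First I would set up the dictionary. Given a tuple of nowhere orthogonal planes in $\mathbf{R}^d$, choose orthonormal bases $U_i\in\mathbf{R}^{d\times 2}$ of the planes and form the block Gramian $A=(U_i^*U_j)_{i,j\in[n]}$; changing the basis of plane $i$ amounts to $A_{ij}\mapsto g_i^*A_{ij}g_j$ for some $g_i\in\operatorname{O}(2)$, and by the spectral theorem (as used in the proof of Proposition~\ref{prop.chien-waldron}) two tuples of planes are $\operatorname{O}(d)$-isomorphic if and only if, for some (equivalently every) choice of orthobases, their block Gramians lie in a common orbit of the $\operatorname{O}(2)^n$-action $(g_i)_{i\in[n]}\colon A\mapsto(g_i^*A_{ij}g_j)_{i,j\in[n]}$. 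So it remains to show that the function $A\mapsto G$ computed by Algorithm~\ref{alg.rnop} is (ii) always an element of the input's $\operatorname{O}(2)^n$-orbit, which makes the induced map on tuples modulo $\operatorname{O}(d)$ injective (equal outputs force the two $\operatorname{O}(2)^n$-orbits to coincide, hence the tuples to be isomorphic), and (i) constant on each $\operatorname{O}(2)^n$-orbit, which makes that map well defined, hence an invariant.

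Part (ii) I would dispatch directly: in every branch $G=\min(MAM^*,\,SMAM^*S)$ taken lexicographically, where $M=\operatorname{diag}(M_1,\dots,M_n)$ has all blocks in $\operatorname{O}(2)$ — namely $M_i\in\{W_i,\tilde W_i\}$ in the first branch (orthogonal as factors of an SVD or a polar decomposition) and $M_i\in\{H_{1i},QH_{1i}\}$ in the second (products of orthogonal matrices, since $H_{ij}:=\alpha_{ij}A_{ij}\in\operatorname{O}(2)$ whenever $A_{ij}$ has equal singular values and since $Q\in\operatorname{SO}(2)$) — and $S=\operatorname{diag}(R,\dots,R)\in\operatorname{O}(2)^n$; hence $G$ lies in the $\operatorname{O}(2)^n$-orbit of $A$. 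Nowhere orthogonality ensures each off-diagonal $A_{ij}$ is invertible, so the scalars $\alpha_{ij}$ and the unique polar decompositions used in the algorithm all exist.

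For part (i), fix $A'=(g_i^*A_{ij}g_j)_{i,j}$ and write $\mathbf g=\operatorname{diag}(g_1,\dots,g_n)$. I would first note that the branch taken and the lexicographically first index pair $(k,l)$ chosen inside it are orbit-invariant: $g_k^*A_{kl}g_l$ has the same singular values as $A_{kl}$, and in the second branch $\det(DHD^*)_{kl}$ will turn out to be conjugation-invariant. In the first branch, the SVD $A_{kl}=W_k\Sigma V^*$ is unique up to $(W_k,V)\mapsto(W_kD_0,VD_0)$ with $D_0\in\{\pm I,\pm R\}$ (using $\sigma_1>\sigma_2>0$), so on the input $g_k^*A_{kl}g_l$ the routine returns $W_k'=g_k^*W_kD_0$ for some such $D_0$; the identity $(D_0XD_0)^{1/2}=D_0X^{1/2}D_0$ for positive definite $X$ propagates this through the polar decompositions to $W_j'=g_j^*W_jD_0$ for all $j$, and $\tilde W_k=W_kR$ similarly forces $\tilde W_j=W_jR$, i.e.\ $\tilde D=DS$. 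Thus $D'=\mathbf g^*D\operatorname{diag}(D_0,\dots,D_0)$, hence $(D')^*A'D'=\operatorname{diag}(D_0,\dots,D_0)(D^*AD)\operatorname{diag}(D_0,\dots,D_0)$, which equals $D^*AD$ if $D_0=\pm I$ and $S(D^*AD)S$ if $D_0=\pm R$; combined with $\tilde D^*A\tilde D=S(D^*AD)S$, the unordered pair $\{(D')^*A'D',\,(\tilde D')^*A'\tilde D'\}$ always coincides with $\{D^*AD,\,S(D^*AD)S\}$, so the lexicographic minima match.

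In the second branch every $A_{ij}=\sigma_{ij}H_{ij}$ with $\sigma_{ij}>0$, $H_{ij}\in\operatorname{O}(2)$, $H_{ii}=I$, $H_{ji}=H_{ij}^*$, and $H=(H_{ij})$ transforms like $A$. With $D=\operatorname{diag}(H_{11},\dots,H_{1n})$ one has $(DHD^*)_{ij}=H_{1i}H_{ij}H_{1j}^*\in\operatorname{O}(2)$ (first block row and column equal to $I$) and $D'H'(D')^*=\operatorname{diag}(g_1,\dots,g_1)^*(DHD^*)\operatorname{diag}(g_1,\dots,g_1)$, so the inner branch and $(k,l)$ are orbit-invariant. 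If all $\det(DHD^*)_{kl}=1$, every block $\sigma_{ij}(DHD^*)_{ij}$ of $DAD^*$ lies in the commutative set $\{\lambda\rho:\lambda>0,\rho\in\operatorname{SO}(2)\}$, so conjugation by $\operatorname{diag}(g_1,\dots,g_1)$ fixes $DAD^*$ when $g_1\in\operatorname{SO}(2)$ and conjugates it by $S$ otherwise (write $g_1=(g_1R)R$), giving $D'A'(D')^*\in\{DAD^*,S(DAD^*)S\}$ and matching minima. Otherwise $T:=(DHD^*)_{kl}$ is a reflection ($T^*=T$, $T^2=I$), $Q:=(TR)^{-1/2}\in\operatorname{SO}(2)$, and $E:=\operatorname{diag}(Q,\dots,Q)D$; the other square root $-Q$ merely negates $E$ and fixes $EAE^*$, and under $A\mapsto A'$ one has $T':=(D'H'(D')^*)_{kl}=g_1^*Tg_1$ and $Q':=(T'R)^{-1/2}$ with $Q'g_1^*\in\{\pm Q,\pm RQ\}$ according as $\det g_1=\pm1$, so that $E'A'(E')^*=\operatorname{diag}(Q'g_1^*,\dots,Q'g_1^*)(DAD^*)\operatorname{diag}(Q'g_1^*,\dots,Q'g_1^*)^*\in\{EAE^*,S(EAE^*)S\}$ and once more the minima agree. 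This last case — showing that $Q=(TR)^{-1/2}$ transforms compatibly with both the orbit action and the sign of the square root — is the step I expect to be the main obstacle; it is exactly where Lemma~\ref{lem.so moves} earns its keep, by writing the relevant $2\times2$ matrices as rotations and reflections and using the identity $AB=BA^{\det B}$ to commute rotations past $T$ and $R$, with the computation naturally forking on $\det g_1$. With (i) and (ii) established, the map is a well-defined injective invariant, proving the theorem.
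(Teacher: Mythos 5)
Your proposal is correct and follows essentially the same route as the paper's proof: injectivity comes from observing that every branch outputs a block-orthogonal conjugate of the input, and invariance is checked branch by branch using the (sign/$R$) ambiguity of the SVD propagated through the unique polar decompositions in the first branch, and the commutativity of $\operatorname{SO}(2)$ (Lemma~\ref{lem.so moves}) to control $Q=(T_{kl}R)^{-1/2}$ in the second. The only cosmetic difference is that you track the combination $Q'g_1^*\in\{\pm Q,\pm RQ\}$ where the paper instead splits its computation on $\det T_{ij}$; both land on $\{(EAE^*)',S(EAE^*)'S\}=\{EAE^*,SEAE^*S\}$.
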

\ejk{We note that the Algorithm~\ref{alg.rnop} takes as input (and gives as output) a Gramian of $n$ orthonormal bases of nowhere orthogonal planes in $\mathbf{R}^d)$. That is, given an element $(\operatorname{Gr}(2,\mathbf{R}^d))^n$, one fixes an orthonormal basis (i.e., columns of a $d \times 2$ matrix $A_i$) for each of the points in $\operatorname{Gr}(2,\mathbf{R}^d)$ and then computes the Gramian of the $2n$ vectors (i.e., columns of $d \times 2n$ matrix $(A_1, A_2, \hdots A_n)$). The Gramian output by the algorithm can then be factored using spectral methods to yield a $d \times 2n$ matrix $(B_1, B_2, \hdots B_n)$ with columns orthonormal bases of points in $\operatorname{Gr}(2,\mathbf{R}^d)$.
Thus, this theorem also means that the algorithm returns the same output regardless of choice of orthonormal bases for elements of $(\operatorname{Gr}(2,\mathbf{R}^d))^n$.}
\begin{proof}
First, if two different inputs $A_1$ and $A_2$ produce the same output $G$, then by the construction of $G$ in both cases, there must exist block diagonal unitary matrices $U_1$ and $U_2$ such that $G=U_1A_1U_1^*=U_2A_2U_2^*$.
It then follows that $A_2=(U_2^*U_1)A_1(U_2^*U_1)^*$, that is, $A_1$ and $A_2$ are equivalent.
It remains to show that equivalent inputs produce identical outputs.

Take any tuple $\lb U_i\rb_{i\in[n]}$ in $\operatorname{O}(2)$ and put $U=\operatorname{diag}(U_1,\ldots,U_n)$.
We will first show that $UAU^*$ produces the same output as $A$.
Throughout, we use $\sharp$ to denote the version of calculations that come from $UAU^*$, e.g., $A^\sharp=UAU^*$.
First, $A_{kl}$ has the same singular values as $A_{kl}^\sharp=U_kA_{kl}U_l^*$, and so $(k,l)^\sharp$ exists if and only if $(k,l)$ exists for the first condition in Algorithm~\ref{alg.rnop}.
Suppose $(k,l)^\sharp=(k,l)$ does exist.
Next, there are four choices of left singular vectors of $U_kA_{kl}U_l^*$, namely, $W_k^\sharp\in\{\pm U_kW_k,\pm U_kW_kR\}$.
As such, there exists $\epsilon\in\{\pm1\}$ and $t\in\{0,1\}$ such that
\[
W_k^\sharp=\epsilon U_kW_kR^t,
\qquad
\widetilde{W}_k^\sharp=\epsilon U_kW_kR^{t+1}.
\]
Since each $A_{kj}$ is invertible, the polar decompositions are unique, and we have
\[
(W_k^*A_{kj})^\sharp
=\epsilon R^tW_k^*U_k^*U_kA_{kj}U_j^*
=\epsilon R^tW_k^*A_{kj}U_j^*
=\left\{\begin{array}{ll}
\epsilon P_jW_j^*U_j^* &\text{if }t=0\\
\epsilon \widetilde{P_j}\widetilde{W_j}^*U_j^* &\text{if }t=1.
\end{array}\right.
\]
Either way, the polar decomposition gives $W_j^\sharp=\epsilon U_jW_jR^t$.
Similarly, $\widetilde{W}_j^\sharp=\epsilon U_jW_jR^{t+1}$.
With this we see that
\[
(D^*AD)_{ij}^\sharp
=(W_i^*A_{ij}W_j)^\sharp
=R^tW_i^*U_i^*U_iA_{ij}U_j^*U_jW_jR^t
=(W_iR^t)^*A_{ij}(W_jR^t),
\]
and similarly $(\widetilde{D}^*A\widetilde{D})_{ij}^\sharp=(W_iR^{t+1})^*A_{ij}(W_jR^{t+1})$.
It follows that $\{(D^*AD)^\sharp,(\widetilde{D}^*A\widetilde{D})^\sharp\}=\{D^*AD,\widetilde{D}^*A\widetilde{D}\}$, and so $G^\sharp=G$.

Next, we suppose that no such $(k,l)$ exists.
Since $A_{ij}^\sharp=U_iA_{ij}U_j^*$ with $U_i,U_j\in\operatorname{O}(2)$, then $\alpha_{ij}^\sharp=\alpha_{ij}$ and so $H_{ij}^\sharp=U_iH_{ij}U_j^*$.
Next,
\[
(DHD^*)_{kl}^\sharp
=(H_{1k}H_{kl}H_{1l}^*)^\sharp
=U_1H_{1k}U_k^*U_kH_{kl}U_l^*U_lH_{1l}^*U_1^*
=U_1(DHD^*)_{kl}U_1^*,
\]
and so $\det(DHD^*)_{kl}^\sharp=\det(DHD^*)_{kl}$.
For the remainder of the proof, put $T=DHD^*$ and define $t$ to be $1$ if $\det U_1=-1$, and otherwise $0$.

Suppose there exists $(k,l)^\sharp=(k,l)$ such that $\det T_{kl}=-1$.
Then $Q^\sharp=\pm(U_1T_{kl}U_1^*R)^{-1/2}$, and since $Q^\sharp\in\operatorname{SO}(2)$, Lemma~\ref{lem.so moves} gives
\[
(EAE^*)_{ij}^\sharp
=\alpha_{ij}^{-1}(QT_{ij}Q^*)^\sharp
=\alpha_{ij}^{-1}Q^\sharp U_1T_{ij}U_1^*(Q^\sharp)^*
=\alpha_{ij}^{-1}U_1T_{ij}U_1^*(Q^\sharp)^{\det T_{ij}}(Q^\sharp)^*.
\]
If $\det T_{ij}=1$, then this reduces to 
\[
(EAE^*)_{ij}^\sharp
=\alpha_{ij}^{-1}U_1T_{ij}U_1^*(Q^\sharp)^{\det T_{ij}}(Q^\sharp)^*
=\alpha_{ij}^{-1}U_1T_{ij}U_1^*
=\alpha_{ij}^{-1}T_{ij}^{\det U_1}
=\alpha_{ij}^{-1}R^tT_{ij}R^t.
\]
Otherwise, $\det T_{ij}=-1$, and so
\[
(EAE^*)_{ij}^\sharp
=\alpha_{ij}^{-1}U_1T_{ij}U_1^*(Q^\sharp)^{-2}
=\alpha_{ij}^{-1}U_1T_{ij}U_1^*U_1T_{kl}U_1^*R
=\alpha_{ij}^{-1}U_1T_{ij}T_{kl}U_1^*R.
\]
Since $\det(T_{ij}T_{kl})=1$, Lemma~\ref{lem.so moves} then gives
\[
(EAE^*)_{ij}^\sharp
=\alpha_{ij}^{-1}U_1T_{ij}T_{kl}U_1^*R
=\alpha_{ij}^{-1}(T_{ij}T_{kl})^{\det U_1}R
=\alpha_{ij}^{-1}R^tT_{ij}T_{kl}R^{t+1}.
\]
Similarly, 
\[
(EAE^*)_{ij}
=\left\{\begin{array}{ll}
\alpha_{ij}^{-1}T_{ij}&\text{if }\det T_{ij}=1\\
\alpha_{ij}^{-1}T_{ij}T_{kl}R&\text{if }\det T_{ij}=-1,
\end{array}\right.
\]
meaning $\{(EAE^*)^\sharp,S(EAE^*)^\sharp S\}=\{EAE^*,SEAE^* S\}$, and so $G^\sharp=G$.

In the final case, we have $\det(DHD^*)_{ij}^\sharp=\det(DHD^*)_{ij}=1$ for every $i,j\in[n]$.
Here, Lemma~\ref{lem.so moves} gives
\[
T_{ij}^\sharp
=U_1T_{ij}U_1^*
=T_{ij}^{\det U_1}
=R^t T_{ij} R^t,
\]
meaning $\{T^\sharp,ST^\sharp S\}=\{T,STS\}$, and so $G^\sharp=G$.
\end{proof}

A Matlab implementation of Algorithm~\ref{alg.rnop} may be downloaded from~\cite{KingSoftware}.

We note that another (uglier) algorithm produces a normalized Gramian for generic rank-$r$ subspaces, but the algorithm we found does not produce a Gramian if any two of the subspaces are isoclinic (for example).
Due to this failure, we decided to not report the details of this algorithm.

\subsection{$H^*$-algebras and generalized Bargmann invariants}\label{subsec:BI}

In pursuit of \ejk{a complete} invariant for $(\operatorname{Gr}(r,\mathbf{F}^d))^n$ modulo $\operatorname{U}(d,\mathbf{F},\sigma)$, we consider traces of products of matrices, generalizing Bargmann invariants and building on the approaches in~\cite{Specht:40,Wiegmann:62,Pearcy:62,KaluzninH:66,GallagherP:77,Shapiro:91}.  There are large upper bounds on the number of traces of products that must be computed to generate \ejk{a complete} invariant on a single ($1$-tuple) $d \times d$ matrix, like $4^{d^2}$ \cite{Pearcy:62}, and we prove in Lemma~\ref{lem:lowerbd} that there is a lower bound on Bargmann invariants that must be computed in general to provide \ejk{a complete} invariant for tuples of lines. Thus, our goal of this section is to give (two different) algorithms to compute injective invariants that generalize Bargmann invariants and require a reasonable number of computations.  Neither requires genericity of the subspaces.

First, we clarify how we must use these invariants with the help of a lemma:

\begin{lemma}~\label{lem:lowerbd}
Consider any function $f\colon (\operatorname{Gr}(1,\mathbf{F}^d))^d\to\mathbf{F}^m$ such that each coordinate function of $f$ is a fixed Bargmann invariant.
Then $f$ is \ejk{a complete} invariant of $(\operatorname{Gr}(1,\mathbf{F}^d))^d$ modulo $\operatorname{U}(d,\mathbf{F},\sigma)$ only if $m\geq (d-1)!/2$.
\end{lemma}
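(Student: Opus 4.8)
The plan is to set up a counting argument: if $f$ is an injective invariant built from $m$ fixed Bargmann invariants, then those $m$ invariants must already separate a family of configurations that is ``large'' in an appropriate sense, and I will exhibit a family of size roughly $(d-1)!/2$ on which the $m$ invariants cannot do better than one-per-configuration unless $m$ is at least that large. Concretely, I would consider tuples $\mathscr{L}=\lb \ell_i\rb_{i\in[d]}$ of $d$ lines in $\mathbf{F}^d$ whose frame graph is a single $d$-cycle $1\leftrightarrow 2\leftrightarrow\cdots\leftrightarrow d\leftrightarrow 1$, i.e. with $\langle v_i,v_j\rangle=0$ except for consecutive indices (mod $d$). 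By Proposition~\ref{prop.chien-waldron}, taking the spanning tree $F$ to be the path $1\!-\!2\!-\!\cdots\!-\!d$, such a configuration is determined up to isometric isomorphism by its $2$-products $|\langle v_i,v_{i+1}\rangle|^2$ together with the single $d$-product $\Delta(v_1,\ldots,v_d)$ around the cycle. So I would fix all the $2$-products to a common value (say choose unit vectors with $|\langle v_i,v_{i+1}\rangle|^2=c$ for a fixed generic $c\in(0,1)$) and let the configurations vary only through the cycle $d$-product, which can be made to take any value of a prescribed modulus (with that modulus determined by $c$) — this gives a one-real-parameter (or, over $\mathbf{C}$, a genuinely rich) family of pairwise non-isometric configurations.

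Next I would analyze what an arbitrary Bargmann invariant $\Delta(v_{i_1},\ldots,v_{i_m})=\operatorname{tr}(P_{i_1}\cdots P_{i_m})$ evaluates to on such a cycle configuration. Because $\langle v_i,v_j\rangle=0$ unless $i,j$ are equal or adjacent on the $d$-cycle, the only index sequences that yield a nonzero product are closed walks in the $d$-cycle graph; and such a closed walk, up to the obvious cyclic-and-reversal symmetry, either (i) traverses the full $d$-cycle some number of times, or (ii) stays within a proper arc of the cycle and hence is a walk in a path graph. Walks of type (ii) only ever see the fixed $2$-products (path graphs have no independent cycles), so on our family they are \emph{constant}. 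Walks of type (i) that wind around the cycle $k$ times evaluate to (a fixed power of $c$) times $\Delta(v_1,\ldots,v_d)^{k}$ (and its conjugate, depending on orientation) — essentially a monomial in the single cycle invariant $z:=\Delta(v_1,\ldots,v_d)$ and $\bar z$. The upshot: \emph{every} Bargmann invariant restricts on this family to a function of the form $\text{const}\cdot z^a\bar z^b$.

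The key step — and the main obstacle — is to turn ``each coordinate is a monomial in $z,\bar z$'' into the lower bound $m\geq (d-1)!/2$. This requires showing the family of cycle configurations I am allowed to use is itself large enough that $m$ monomials cannot separate it unless $m$ is that big; a single winding parameter is not enough, so I expect the real content is to use \emph{more than one} cycle structure simultaneously. The natural fix is to range over all Hamiltonian cycles on $[d]$: there are $(d-1)!/2$ of them as cyclic orders up to reversal, and for each I build a configuration as above whose frame graph is exactly that cycle. A given Bargmann invariant $\Delta(v_{i_1},\ldots,v_{i_m})$ is nonconstant on the $H$-family only if the closed walk $i_1\to i_2\to\cdots\to i_m\to i_1$ lies in the cycle graph of $H$ and actually winds around it — which, once we also fix all $2$-products to the same value $c$ across all the families, pins down $H$ essentially uniquely from the support of the index multiset. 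Hence each of the $m$ coordinate functions is nonconstant on at most one of the $(d-1)!/2$ Hamiltonian-cycle families. If $m<(d-1)!/2$ there is a Hamiltonian cycle $H$ on which \emph{all} coordinates of $f$ are constant, so $f$ fails to separate the (at least two) non-isometric members of the $H$-family, contradicting injectivity. I would need to check one boundary point carefully: that within a single $H$-family I really do get two non-isometric configurations on which every type-(i) monomial $z^a\bar z^b$ also agrees — this is arranged by choosing two configurations whose cycle invariants $z,z'$ share the same modulus (forced by the common $2$-products) but are genuinely different and, over $\mathbf{R}$, differ in sign while all the relevant exponents $a$ are even, or over $\mathbf{C}$ simply picking $z,z'$ on the forced circle with $z/z'$ not a root of unity of small order relative to the (finitely many) winding numbers appearing among the $m$ invariants. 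Assembling these pieces gives $m\geq (d-1)!/2$.
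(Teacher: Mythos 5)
Your proposal is correct and follows essentially the same route as the paper: for each of the $(d-1)!/2$ Hamiltonian cycles on $[d]$ one builds a pair of non-isometric line configurations whose frame graph is that cycle, observes that a Bargmann invariant can distinguish the pair only if its closed walk winds around that particular cycle (hence is supported on all of it, pinning the cycle down uniquely), and concludes by counting. The paper simply instantiates your family concretely as $e_1+e_2,\ldots,e_{d-1}+e_d,\,e_d\pm e_1$ and its permutations, which makes the ``two non-isometric members per cycle'' step (the boundary point you flag) immediate via the sign of the $d$-product.
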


\begin{proof}
Select $\epsilon\in\{\pm\}$ and consider the lines $\mathscr{L}_{\epsilon}$ spanned by the vectors
\[
e_{1}+e_{2},
\quad
\ldots,
\quad
e_{d-1}+e_{d},
\quad
e_{d}+\epsilon e_{1}.
\]
For both choices of $\epsilon$, the frame graph $G(\mathscr{L}_\epsilon)$ is the cycle graph $C_d$ of length $d$, and so the maximal spanning forest $F$ is a path graph.
The $2$-products of $\mathscr{L}_{+}$ equal those of $\mathscr{L}_{-}$, but the $d$-product corresponding to the lone cycle $C_d\in C(F)$ has the same sign as $\epsilon$.
As such, $\mathscr{L}_+$ is not isomorphic to $\mathscr{L}_-$ modulo $\operatorname{U}(d,\mathbf{F},\sigma)$ by Proposition~\ref{prop.chien-waldron}.
The Bargmann invariants that do not vanish on $\mathscr{L}_\epsilon$ are the ones that correspond to closed walks along $C_d$.
Of these, the Bargmann invariants that distinguish $\mathscr{L}_+$ from $\mathscr{L}_-$ are closed walks with odd winding number around $C_d$.
Overall, distinguishing $\mathscr{L}_+$ from $\mathscr{L}_-$ requires a Bargmann invariant whose closed walk is supported on all of $C_d$.

Now select $\pi\in S_d$ and $\epsilon\in\{\pm\}$ and consider the lines $\pi\cdot\mathscr{L}_{\epsilon}$ obtained by permuting the tuple $\mathscr{L}_{\epsilon}$ according to $\pi$.
Distinguishing $\pi\cdot\mathscr{L}_{+}$ from $\pi\cdot\mathscr{L}_{-}$ for every $\pi\in S_d$ requires Bargmann invariants whose closed walks are supported on each of the length-$d$ cycles in the complete graph $K_d$.
The result follows from the fact that there are $(d-1)!/2$ such cycles.
\end{proof}

Considering $(d-1)!/2$ is far too large for efficient computation, we instead accept a different type of injective invariant:
Given a tuple $\mathscr{L}$ of $n$ lines, return a collection $W$ of walks on $K_n$ as well as the Bargmann invariant of $w$ evaluated at $\mathscr{L}$ for each $w\in W$.
Note that this is the form provided by Proposition~\ref{prop.chien-waldron}, at least if $F$ were selected canonically; this can be accomplished by iteratively growing $F$ from edges in lexicographic order.

The remainder of this section considers two different generalizations of the Bargmann invariants, and we use these invariants to distinguish between tuples of subspaces modulo isometric isomorphism.
Our results for both generalizations apply ideas from the representation theory of $H^*$-algebras.
For what follows, we remind the reader that $\mathbf{F}\in\{\mathbf{R},\mathbf{C}\}$.

\begin{definition}
We say $\mathscr{A}$ is an \textbf{$H^*$-algebra} over $(\mathbf{F},\sigma)$ if
\begin{itemize}
\item[(H1)] $(\mathscr{A},+,\times,\mathbf{F})$ is a finite-dimensional associative algebra with unity,
\item[(H2)] $*\colon\mathscr{A}\to\mathscr{A}$ is a conjugate-linear involutory antiautomorphism, and
\item[(H3)] $(\cdot,\cdot)\colon\mathscr{A}\times\mathscr{A}\to\mathbf{F}$ is a Hermitian form on $\mathscr{A}$ such that
\[
(xy,z)=(y,x^*z)=(x,zy^*)
\qquad
\forall x,y,z\in\mathscr{A}.
\]
\end{itemize}
A \textbf{representation} of an $H^*$-algebra $\mathscr{A}$ is a $*$-algebra homomorphism $f\colon\mathscr{A}\to\mathbf{F}^{k\times k}$.
The corresponding \textbf{character} $\chi_f\colon\mathscr{A}\to\mathbf{F}$ is given by $\chi_f(x)=\operatorname{tr}f(x)$.
Two representations $f,g\colon\mathscr{A}\to\mathbf{F}^{k\times k}$ are \textbf{equivalent} if there exists $U\in\operatorname{U}(k,\mathbf{F})$ such that $g(x)=Uf(x)U^*$.
\end{definition}

\ejk{One example of an $H^*$-algebra over $\mathbf{F}$ is $\mathbf{F}^{k \times k}$, with conjugate-linear involutory antiautomorphism $*$ the adjoint and Hermitian form $(\cdot,\cdot)$ the Hilbert--Schmidt inner product.  The quaternions form an $H^*$-algebra over the reals (see, e.g., \cite{BalachandranS:86}), where $q^* = \overline{q}$ and $(q_1,q_2)=\operatorname{Re} q_1\overline{q_2}$. $H^*$-algebras have also more recently arisen in infinite-dimensional quantum mechanics \cite{AbramskyH:12}.}

\ejk{One might ask why the Hermitian form is not mentioned in the definition of a representation of an $H^*$-algebra, considering it is an important part of the structure. Homomorphisms of Hilbert spaces are continuous linear operators, which also do not explicitly involve the Hermitian form, just the topology induced from it. Since we are dealing with finite-dimensional objects, this reduces to any linear map, e.g., an algebra homomorphism.}

\begin{proposition}[Theorem~3 in~\cite{GallagherP:77}]
\label{prop.character theory}
Two representations of an $H^*$-algebra are equivalent if and only if their characters are equal.
\end{proposition}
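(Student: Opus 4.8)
The plan is to establish the two implications of Proposition~\ref{prop.character theory} separately, with essentially all of the content residing in the reverse direction. The forward implication is immediate: if $g(x)=Uf(x)U^*$ for some $U\in\operatorname{U}(k,\mathbf{F})$, then $U^*=U^{-1}$, so $\chi_g(x)=\operatorname{tr}(Uf(x)U^{-1})=\operatorname{tr}f(x)=\chi_f(x)$ for every $x\in\mathscr{A}$. For the reverse implication I would run the standard character-theory argument, adapted to $*$-representations. The first step is complete reducibility: if $W\subseteq\mathbf{F}^k$ is invariant under $f(\mathscr{A})$, then for $v\in W^\perp$, $w\in W$, and $x\in\mathscr{A}$ we have $\langle f(x)v,w\rangle=\langle v,f(x)^*w\rangle=\langle v,f(x^*)w\rangle=0$ since $f(x^*)w\in W$, so $W^\perp$ is invariant as well; iterating, $\mathbf{F}^k$ splits as an orthogonal direct sum of irreducible subrepresentations (after first splitting off the kernel of the self-adjoint idempotent $f(1)$ as a trivial summand, in case $f$ is not assumed unital).

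The second step is to organize these irreducibles globally. By the structure theory of $H^*$-algebras developed in~\cite{GallagherP:77}, $\mathscr{A}$ is semisimple and decomposes as a finite orthogonal direct sum $\mathscr{A}\cong\bigoplus_{i=1}^s\mathscr{A}_i$ of simple $H^*$-algebras, each $*$-isomorphic to a matrix algebra $M_{d_i}(\mathbf{D}_i)$ over a division algebra over $\mathbf{F}$; in particular $\mathscr{A}$ has only finitely many irreducible representations $\rho_1,\dots,\rho_t$ up to equivalence, and two equivalent $*$-representations are in fact unitarily equivalent (apply Schur's lemma to the positive part of the polar decomposition of an intertwiner, noting that a Hermitian element of the commutant division algebra must be scalar). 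Combining with the first step, we may write $f\cong\bigoplus_j m_j\rho_j$ and $g\cong\bigoplus_j n_j\rho_j$ unitarily, for nonnegative integers $m_j,n_j$, whence $\chi_f=\sum_j m_j\chi_{\rho_j}$ and $\chi_g=\sum_j n_j\chi_{\rho_j}$. It therefore suffices to prove that $\chi_{\rho_1},\dots,\chi_{\rho_t}$ are linearly independent as $\mathbf{F}$-valued functions on $\mathscr{A}$: then $\chi_f=\chi_g$ forces $m_j=n_j$ for all $j$, hence $f\cong g$, i.e.\ $g=UfU^*$ for some unitary $U$.

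The main obstacle is precisely this linear independence. The plan is to read it off the Wedderburn-type decomposition above: each $\rho_j$ annihilates all but one of the central idempotents $1_{\mathscr{A}_i}$ and is nonzero there (its trace at that idempotent is a positive $\mathbf{F}$-dimension, using characteristic zero), so distinct $\rho_j$ supported on distinct components have characters with disjoint support, while $\rho_j$ and $\rho_{j'}$ supported on the same $\mathscr{A}_i$ must coincide on it once one checks that each $M_{d_i}(\mathbf{D}_i)$ has a unique irreducible $*$-representation up to equivalence, so in fact $t=s$; evaluating a hypothetical vanishing linear combination first on the idempotents $1_{\mathscr{A}_i}$ and then on matrix units within each component isolates the coefficients. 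The delicate point, and where I would spend the most care, is the real case $\mathbf{F}=\mathbf{R}$: one must verify that realifying the unique irreducible module of $M_{d_i}(\mathbf{C})$ or $M_{d_i}(\mathbf{H})$ still yields a single equivalence class of $*$-representations of $\mathscr{A}_i$ and that its character is nonzero on $\mathscr{A}_i$, which is exactly the bookkeeping encoded in the structure theory of real $H^*$-algebras.
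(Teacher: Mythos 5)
The paper does not prove this proposition at all: it is imported verbatim as Theorem~3 of Gallagher--Proulx~\cite{GallagherP:77}, so there is no in-paper argument to compare against. On its own merits, your outline is the standard and essentially correct route (and is in substance the Wedderburn/Ambrose-style argument underlying the cited result): trace-invariance under conjugation for the forward direction; complete reducibility of $*$-representations via invariance of orthogonal complements; reduction to linear independence of the irreducible characters, read off the decomposition into simple components by evaluating on central idempotents. Two remarks. First, several of your steps silently require the Hermitian form in (H3) to be positive definite (or at least nondegenerate) -- the paper's definition only says ``Hermitian form,'' but in the intended application the form is $(x,y)=\operatorname{tr}(x^*y)$ on a $*$-closed subalgebra of $\mathbf{F}^{k\times k}$, so this is harmless; in fact, since every representation here lands in a $*$-closed matrix algebra, one could bypass the abstract structure theory of $\mathscr{A}$ entirely and argue inside $\mathbf{F}^{k\times k}$. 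Second, your appeal to Schur's lemma in upgrading algebraic to unitary equivalence is unnecessary: if $Tf(x)=g(x)T$ with $T$ invertible, then $T^*$ intertwines in the reverse direction, so $T^*T$ commutes with $f(\mathscr{A})$, and the unitary factor of the polar decomposition already intertwines. The real-case bookkeeping you defer (realified irreducibles of $M_{d}(\mathbf{C})$ and $M_d(\mathbf{H})$ remaining irreducible with nonzero character supported on a single component) does go through, but as written it is a plan rather than a proof, so that step would still need to be carried out.
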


Given $S\subseteq\mathbf{F}^{k\times k}$, let $\mathscr{A}(S)$ denote the smallest algebra with unity containing $S$.

\begin{lemma}
\label{lem.alg iso}
Consider tuples $\lb A_i\rb_{i\in[n]}$ and $\lb B_i\rb_{i\in[n]}$ over $\mathbf{F}^{k\times k}$ for which there exists $\pi\in S_n$ such that $A_i^*=A_{\pi(i)}$ and $B_i^*=B_{\pi(i)}$ for every $i\in[n]$.
Select words $\lb w_j(x_1,\ldots,x_n)\rb_{j\in[m]}$ in noncommuting variables $x_i$ such that the evaluation $\lb E_j:=w_j(A_1,\ldots,A_n)\rb_{j\in[m]}$ is a basis for $\mathscr{A}(\lb A_i\rb_{i\in[n]})$.
(Here, evaluating the word of length zero produces the identity matrix.)
There exists $U\in\operatorname{U}(k,\mathbf{F},\sigma)$ such that $UA_iU^*=B_i$ for every $i\in[n]$ if and only if
\begin{itemize}
\item[(i)] the evaluation $\lb F_j:=w_j(B_1,\ldots,B_n)\rb_{j\in[m]}$ is a basis for $\mathscr{A}(\lb B_i\rb_{i\in[n]})$,
\item[(ii)] $\operatorname{tr}(E_i^*E_j)=\operatorname{tr}(F_i^*F_j)$ for every $i,j\in[m]$,
\item[(iii)] $\operatorname{tr}(E_i^*E_jE_k)=\operatorname{tr}(F_i^*F_jF_k)$ for every $i,j,k\in[m]$, and
\item[(iv)] $\operatorname{tr}(E_i^*A_j)=\operatorname{tr}(F_i^*B_j)$ for every $i\in[m]$, $j\in[n]$.
\end{itemize}
\end{lemma}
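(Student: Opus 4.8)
The plan is to dispatch the forward implication by the routine fact that unitary conjugation is a trace-preserving $*$-algebra isomorphism, and to prove the substantive reverse implication in two stages: first, use conditions (i)--(iv) together with the nondegeneracy of the trace form to build an abstract unital $*$-algebra isomorphism $\varphi\colon\mathscr{A}(\lb A_i\rb_{i\in[n]})\to\mathscr{A}(\lb B_i\rb_{i\in[n]})$ satisfying $\varphi(A_i)=B_i$ for all $i$; second, upgrade $\varphi$ to an honest unitary conjugation by invoking the character theory of $H^*$-algebras, Proposition~\ref{prop.character theory}. For ($\Rightarrow$): if $UA_iU^*=B_i$ with $U\in\operatorname{U}(k,\mathbf{F},\sigma)$, then $UE_jU^*=w_j(UA_1U^*,\ldots,UA_nU^*)=F_j$, so $X\mapsto UXU^*$ is a $*$-isomorphism carrying the basis $\lb E_j\rb$ onto $\lb F_j\rb$, which proves (i), while (ii)--(iv) are immediate from the invariance of $\operatorname{tr}$ and $*$ under unitary conjugation.

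For ($\Leftarrow$), put $\mathscr{A}:=\mathscr{A}(\lb A_i\rb)$ and $\mathscr{B}:=\mathscr{A}(\lb B_i\rb)$. The hypothesis $A_i^*=A_{\pi(i)}$ makes $\lb A_i\rb$ closed under $*$, so $\mathscr{A}$ is a $*$-subalgebra of $\mathbf{F}^{k\times k}$ containing $I_k$; equipped with the form $(X,Y):=\operatorname{tr}(X^*Y)$ it satisfies (H1)--(H3) and hence is an $H^*$-algebra over $(\mathbf{F},\sigma)$, and likewise for $\mathscr{B}$. Since this form is the restriction of the positive-definite standard form on $\mathbf{F}^{k\times k}$, the Gram matrix $M:=(\operatorname{tr}(E_i^*E_j))_{i,j\in[m]}$ of the basis $\lb E_j\rb$ is invertible, and by (ii) it coincides with the Gram matrix of $\lb F_j\rb$. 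By (i), $\lb F_j\rb$ is a basis of $\mathscr{B}$, so there is a unique linear bijection $\varphi\colon\mathscr{A}\to\mathscr{B}$ with $\varphi(E_j)=F_j$, and since it matches two bases having identical Gram matrices, $\varphi$ is an isometry for the trace forms. Expanding $E_jE_k=\sum_l c_{jkl}E_l$ gives $\sum_l c_{jkl}M_{il}=\operatorname{tr}(E_i^*E_jE_k)$ for all $i$, so $(c_{jkl})_l=M^{-1}(\operatorname{tr}(E_i^*E_jE_k))_i$; by (ii) and (iii) the structure constants of $\lb F_j\rb$ are these same numbers $c_{jkl}$, whence $\varphi(E_jE_k)=\sum_l c_{jkl}F_l=F_jF_k=\varphi(E_j)\varphi(E_k)$, and bilinearity makes $\varphi$ an algebra homomorphism; being bijective, it is an isomorphism and $\varphi(I_k)=I_k$. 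Writing $A_i=\sum_j a_{ij}E_j$ and $B_i=\sum_j b_{ij}F_j$, condition (iv) yields $\sum_j a_{ij}M_{lj}=\operatorname{tr}(E_l^*A_i)=\operatorname{tr}(F_l^*B_i)=\sum_j b_{ij}M_{lj}$ for every $l$, so invertibility of $M$ forces $a_{ij}=b_{ij}$ and $\varphi(A_i)=B_i$. Finally, $\varphi$ preserves $*$: since $\lb A_i\rb$ is $*$-closed through $\pi$, each $E_j^*=w_j(A_1,\ldots,A_n)^*$ equals $u_j(A_1,\ldots,A_n)$, where $u_j$ is obtained from $w_j$ by reversal and relabeling of indices through $\pi$; the same word gives $F_j^*=u_j(B_1,\ldots,B_n)$, so $\varphi(E_j^*)=u_j(\varphi(A_1),\ldots,\varphi(A_n))=u_j(B_1,\ldots,B_n)=F_j^*=\varphi(E_j)^*$, and conjugate-linearity of $*$ propagates this to all of $\mathscr{A}$.

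It remains to extract $U$. Both the inclusion $\iota_{\mathscr{A}}\colon\mathscr{A}\hookrightarrow\mathbf{F}^{k\times k}$ and $\iota_{\mathscr{B}}\circ\varphi\colon\mathscr{A}\to\mathbf{F}^{k\times k}$ are $*$-representations of the $H^*$-algebra $\mathscr{A}$ of the same degree $k$, and they have equal characters: for $X\in\mathscr{A}$, $\operatorname{tr}(\varphi(X))=(I_k,\varphi(X))=(\varphi(I_k),\varphi(X))=(I_k,X)=\operatorname{tr}(X)$, using $\varphi(I_k)=I_k$ and that $\varphi$ is an isometry. By Proposition~\ref{prop.character theory} the two representations are equivalent, so there exists $U\in\operatorname{U}(k,\mathbf{F},\sigma)$ with $\varphi(X)=UXU^*$ for all $X\in\mathscr{A}$; in particular $B_i=\varphi(A_i)=UA_iU^*$, as desired.

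I expect the main obstacle to be this last step. Assembling the abstract $*$-isomorphism $\varphi$ from (i)--(iv) is elementary once one observes that the trace form is nondegenerate, so that bases, structure constants, and the coordinates of the generators are all determined by the prescribed traces; but passing from an abstract $*$-isomorphism of the two algebras to a genuine unitary $U\in\operatorname{U}(k,\mathbf{F},\sigma)$ relies essentially on the $H^*$-algebra fact that $*$-representations with equal characters are unitarily equivalent. A subsidiary point requiring care is verifying that $\mathscr{A}$ really is an $H^*$-algebra (so that Proposition~\ref{prop.character theory} applies), which is precisely where the permutation hypothesis $A_i^*=A_{\pi(i)}$ is used.
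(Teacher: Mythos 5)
Your proof is correct and follows essentially the same route as the paper's: define the linear bijection $E_j\mapsto F_j$, use nondegeneracy of the trace form together with (ii)--(iv) to show it is multiplicative and carries $A_i$ to $B_i$, use the permutation hypothesis and word reversal to upgrade it to a $*$-isomorphism, and conclude via equality of characters and Proposition~\ref{prop.character theory}. Your writeup is somewhat more explicit than the paper's (spelling out the Gram matrix, the structure constants, and the character computation), but there is no substantive difference in approach.
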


\begin{proof}
($\Rightarrow$)
Suppose there exists $U\in\operatorname{U}(k,\mathbf{F},\sigma)$ such that $UA_iU^*=B_i$ for every $i\in[n]$.
Then $UE_iU^*=F_i$ for every $i\in[m]$, and (i)--(iv) follow immediately.

($\Leftarrow$)
First, the assumed existence of $\pi\in S_n$ implies that $\mathscr{A}(\lb A_i\rb_{i\in[n]})$ and $\mathscr{A}(\lb B_i\rb_{i\in[n]})$ are $H^*$-algebras.
Indeed, both algebras inherit (H3) from $\mathbf{F}^{k\times k}$ by taking $(x,y)=\operatorname{tr}(x^*y)$.
By (i), there is a unique linear $f\colon\mathscr{A}(\lb A_i\rb_{i\in[n]})\to\mathscr{A}(\lb B_i\rb_{i\in[n]})$ that maps $E_i\mapsto F_i$ for every $i\in[m]$.
Next, (ii) and the non-degeneracy of $(A,B)\mapsto\operatorname{tr}(A^*B)$ implies that for every $x\in\mathscr{A}(\lb A_i\rb_{i\in[n]})$, it holds that $f(x)$ is the unique $y\in\mathscr{A}(\lb B_i\rb_{i\in[n]})$ such that $\operatorname{tr}(E_i^*x)=\operatorname{tr}(F_i^*y)$ for every $i\in[m]$.
This combined with (iii) and (iv) then imply that $f$ maps $E_jE_k\mapsto F_jF_k$ for every $j,k\in[m]$ and $A_j\mapsto B_j$ for every $j\in[n]$.
The former implies that $f$ is \ejk{an} algebra isomorphism, since decomposing $x=\sum_i a_iE_i$ and $y=\sum_j b_jE_j$ gives $xy=\sum_{ij}a_ib_jE_iE_j$, which $f$ then maps to $\sum_{ij}a_ib_jF_iF_j=f(x)f(y)$.
Since $f\colon A_i\mapsto B_i$ for every $i\in[n]$, the assumed existence of $\pi\in S_n$ implies that $f$ is a $*$-algebra isomorphism.
Indeed, letting $Rw$ denote the reversal of the word $w$, then since $f$ is an algebra isomorphism, $f$ maps
\[
(w(A_1,\ldots,A_n))^*=(Rw)(A_1^*,\ldots,A_n^*)=(Rw)(A_{\pi(1)},\ldots,A_{\pi(n)})
\]
to $(Rw)(B_{\pi(1)},\ldots,B_{\pi(n)})=(w(B_1,\ldots,B_n))^*$, and so $f(x^*)=f(x)^*$ by linearity.
At this point, we consider two representations of $\mathscr{A}(\lb A_i\rb_{i\in[n]})$, namely, the identity map and $f$.
Since the identity matrix resides in both $\mathscr{A}(\lb A_i\rb_{i\in[n]})$ and $\mathscr{A}(\lb B_i\rb_{i\in[n]})$ by definition, (ii) together with linearity gives that the characters of these representations are equal, and so Proposition~\ref{prop.character theory} implies the existence of $U\in\operatorname{U}(k,\mathbf{F},\sigma)$ such that $f(x)=UxU^*$.
Since $f\colon A_i\mapsto B_i$ for every $i\in[n]$, we are done.
\end{proof}

In~\cite{GallagherP:77}, Proposition~\ref{prop.character theory} is used to prove (Theorem 4 in~\cite{GallagherP:77}) that calculating the traces of the evaluations of every possible word on the generating matrices of length between one and $4k^2$ (i.e., on the order of $n^{4k^2}$) is \ejk{a complete} invariant.  Our goal in what follows is to prune the list of necessary words to evaluate.

Overall, to determine a tuple of matrices in $\mathbf{F}^{k\times k}$ up to unitary equivalence, it suffices to specify a collection of words $\lb w_i\rb_{i\in[m]}$ that can be used to span the corresponding $H^*$-algebra, and then report traces of the form (ii)--(iv).
In the following, we show that a certain (obvious) choice of words, i.e., the result of Algorithm~\ref{alg.fgma}, is invariant to conjugation by unitary matrices and computable in polynomial time.

\begin{algorithm}[t]
\SetAlgoLined
\KwData{Matrices $\lb A_i\rb_{i\in[n]}$ in $\mathbf{F}^{k\times k}$}
\KwResult{Words $\lb w_j\rb_{j\in[m]}$ such that $\lb w_j(A_1,\ldots,A_n)\rb_{j\in[m]}$ is a basis for $\mathscr{A}(\lb A_i\rb_{i\in[n]})$}
\medskip

Put $w_1=1$ (the word of length zero)\\
Initialize $m_\mathsf{old}=0$ and $m_\mathsf{new}=1$\\
\While{
$m_\mathsf{new}>m_\mathsf{old}$
}{
Update $m_\mathsf{old}=m_\mathsf{new}$\\
\For{$i\in[n]$ and $j\in[m_\mathsf{old}]$
}{
\If{
$A_iw_j(A_1,\ldots,A_n)$ is linearly independent of $\lb w_l(A_1,\ldots,A_n)\rb_{l\in[m_\mathsf{new}]}$
}{
Put $w_{m_\mathsf{new}+1}=x_iw_j$ and update $m_\mathsf{new}=m_\mathsf{new}+1$
}
}
}

\caption{Canonical basis for matrix algebra from finite generating set
 \label{alg.fgma}}
\end{algorithm}

\begin{lemma}
\label{lem.canonical basis for fgma}
Given $\lb A_i\rb_{i\in[n]}$ in $\mathbf{F}^{k\times k}$, Algorithm~\ref{alg.fgma} returns words $\lb w_j\rb_{j\in[m]}$ such that the evaluation $\lb w_j(A_1,\ldots,A_n)\rb_{j\in[m]}$ is a basis for $\mathscr{A}(\lb A_i\rb_{i\in[n]})$.
Given $\lb UA_iU^*\rb_{i\in[n]}$ for some $U\in\operatorname{U}(k,\mathbf{F},\sigma)$, Algorithm~\ref{alg.fgma} returns the same words $\lb w_j\rb_{j\in[m]}$.
Algorithm~\ref{alg.fgma} terminates after at most $m\leq k^2$ iterations of the while loop, and each iteration can be implemented in a way that costs $O(mnk^4)$ operations.
\end{lemma}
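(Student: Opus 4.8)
The plan is to prove the three assertions separately: that the returned words give a basis of $\mathscr{A}((A_i)_{i\in[n]})$, that the output is unchanged under simultaneous unitary conjugation of the $A_i$, and that the stated termination and per-iteration cost bounds hold. \textbf{Correctness.} Write $E_j:=w_j(A_1,\ldots,A_n)$ for the output words and $V:=\operatorname{span}\{E_1,\ldots,E_m\}$. Each $E_j$ is a (possibly empty) product of the $A_i$, hence $E_j\in\mathscr{A}((A_i)_{i\in[n]})$ and so $V\subseteq\mathscr{A}((A_i)_{i\in[n]})$. For the reverse inclusion I would read off the termination condition of the while loop: the loop exits only after a complete pass of the inner for loop appends nothing, and throughout that pass $m_\mathsf{new}$ is constant and equals the final count $m$, so $A_iE_j\in V$ for every $i\in[n]$ and $j\in[m]$; that is, $A_iV\subseteq V$ for each $i$. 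Since $I=E_1\in V$, an induction on word length gives $A_{i_1}\cdots A_{i_\ell}=A_{i_1}(A_{i_2}(\cdots(A_{i_\ell}I)))\in V$ for every word, so $\mathscr{A}((A_i)_{i\in[n]})\subseteq V$ and hence $V=\mathscr{A}((A_i)_{i\in[n]})$; since each $E_j$ is appended only when linearly independent of its predecessors, $(E_j)_{j\in[m]}$ is a basis. The conceptual heart of the lemma is exactly this observation: a subspace of $\mathbf{F}^{k\times k}$ that contains $I$ and is stable under left multiplication by each $A_i$ already contains $\mathscr{A}((A_i)_{i\in[n]})$.

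\textbf{Invariance.} I would run the algorithm in lockstep on the inputs $(A_i)_{i\in[n]}$ and $(B_i)_{i\in[n]}:=(UA_iU^*)_{i\in[n]}$. Conjugation $X\mapsto UXU^*$ is an algebra automorphism of $\mathbf{F}^{k\times k}$, so $w(B_1,\ldots,B_n)=U\,w(A_1,\ldots,A_n)\,U^*$ for every word $w$, and a linear isomorphism preserves both linear independence and membership in a span. The control flow of the algorithm -- the fixed order in which the pairs $(i,j)$ are visited and the point at which the while loop stops -- depends on the matrices only through the outcomes of these linear-independence tests, which therefore agree on the two inputs. Hence every append/skip decision is the same on both runs, and the returned word lists are identical.

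\textbf{Complexity.} Each execution of the line $w_{m_\mathsf{new}+1}=x_iw_j$ strictly increases $\dim V$, and $V\subseteq\mathbf{F}^{k\times k}$ forces $\dim V\le k^2$; hence at most $k^2-1$ words are appended to the initial $w_1=1$, which gives $m\le k^2$ and, in particular, termination. Every while-loop iteration other than the last strictly increases $m_\mathsf{new}\in\{1,\ldots,m\}$, so there are at most $m\le k^2$ iterations. For the per-iteration cost, maintain alongside the words their evaluations $E_j$ (each new one, $A_iE_j$, is produced for free by the independence test that triggered its creation) together with a row-echelon basis of $V$ viewed inside $\mathbf{F}^{k^2}$. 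The inner for loop considers at most $nm$ pairs $(i,j)$, and each costs one $k\times k$ matrix product to form $A_iE_j$ (that is, $O(k^3)$) plus $O(mk^2)$ arithmetic to reduce $A_iE_j$ against the at most $m$ current basis rows and, if independent, extend the basis; this totals $O(nm(k^3+mk^2))=O(nmk^4)$, using $m\le k^2$.

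\textbf{Main obstacle.} I do not expect a genuine obstacle here: the only mathematical content is the left-stability observation in the correctness step. The work is bookkeeping -- matching the iteration count to the loop structure and choosing a data structure for the linear-algebra tests that meets the advertised $O(nmk^4)$ bound -- together with being careful, in the invariance step, that the algorithm's branching is a function of the independence-test outcomes alone.
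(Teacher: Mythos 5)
Your proof is correct and follows essentially the same route as the paper's: the closure/left-stability argument for correctness, invariance of the independence tests under the isometry $X\mapsto UXU^*$, the dimension bound $m\le k^2$ on iterations, and row reduction to meet the $O(mnk^4)$ per-iteration cost. The only cosmetic differences are that you phrase correctness via stability of the span at termination rather than the paper's induction on word length across iterations, and you reduce candidates incrementally rather than via one batch row-echelon computation; both variants are sound and give the same bounds.
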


\begin{proof}
First, consider the set of evaluations of all words at $\lb A_i\rb_{i\in[n]}$.
This set spans $\mathscr{A}(\lb A_i\rb_{i\in[n]})$, which is a subspace of $\mathbf{F}^{k\times k}$, and therefore has finite dimension.
It follows that there exists a basis among these evaluations.
Let $L$ denote the smallest possible length of the longest word in a basis.

For the moment, let us remove the constraint $m_\mathsf{new}>m_\mathsf{old}$ of the while loop.
We claim that after the $l$th iteration of the unconstrained while loop, $\operatorname{span}\lb w_j(A_1,\ldots,A_n)\rb_{j\in[m_\mathsf{new}]}$ contains all evaluations of words of length $l$.
By our initialization $w_1=1$, this holds for $l=0$.
Assume it holds for $l\geq0$.
Then every word of length $l+1$ has the form $x_iw$, where $w$ is a word of length $l$.
Evaluating then produces $A_iw(A_1,\ldots,A_n)$.
By the induction hypothesis, $w(A_1,\ldots,A_n)$ can be expressed as a linear combination of $\lb w_j(A_1,\ldots,A_n)\rb_{j\in[m_\mathsf{old}]}$.
Since we test all of $\lb A_iw_j(A_1,\ldots,A_n)\rb_{j\in[m_\mathsf{old}]}$ for linear independence in order to select $\lb w_j\rb_{j\in[m_\mathsf{new}]}$, it follows that $\operatorname{span}\lb w_j(A_1,\ldots,A_n)\rb_{j\in[m_\mathsf{new}]}$ contains $A_iw(A_1,\ldots,A_n)$.

Now suppose that the $l$th iteration of the unconstrained while loop resulted in $m_\mathsf{new}=m_\mathsf{old}$.
Then no new words were added to $\{w_j\}$ in the $l$th iteration.
In fact, for every $i\in[n]$ and $j\in[m_\mathsf{old}]$, it holds that $A_iw_j(A_1,\ldots,A_n)$ resides in $\operatorname{span}\lb w_l(A_1,\ldots,A_n)\rb_{l\in[m_\mathsf{old}]}$, and so no new words will also be added in any future iteration.
Considering $\lb w_l(A_1,\ldots,A_n)\rb_{l\in[m_\mathsf{new}]}$ forms a basis for $\mathscr{A}(\lb A_i\rb_{i\in[n]})$ by the end of the $L$th iteration, it follows that the original while loop with constraint $m_\mathsf{new}>m_\mathsf{old}$ terminates with a basis after $L+1$ iterations.

Now suppose we were instead given $\lb UA_iU^*\rb_{i\in[n]}$ for some $U\in\operatorname{U}(k,\mathbf{F},\sigma)$.
Since the map $x\mapsto UxU^*$ is a linear isometry over $\mathbf{F}^{k\times k}$, it follows that $UA_iU^*w_j(UA_1U^*,\ldots,UA_nU^*)=UA_iw_j(A_1,\ldots,A_n)U^*$ is linearly independent of
\[
\lb w_l(UA_1U^*,\ldots,UA_nU^*)\rb_{l\in[m_\mathsf{new}]}=\lb Uw_l(A_1,\ldots,A_n)U^*\rb_{l\in[m_\mathsf{new}]}
\]
if and only if $A_iw_j(A_1,\ldots,A_n)$ is linearly independent of $\lb w_l(A_1,\ldots,A_n)\rb_{l\in[m_\mathsf{new}]}$.
As a consequence, Algorithm~\ref{alg.fgma} returns the same words $\lb w_j\rb_{j\in[m]}$.

For the final claim, recall that the while loop terminates after $L+1$ iterations.
To estimate this number of iterations, let $m_l$ denote the dimension of $\operatorname{span}\lb w_l(A_1,\ldots,A_n)\rb_{l\in[m_\mathsf{new}]}$ after the $l$th iteration of the while loop, i.e., $m_l=m_\mathsf{new}$.
Then
\[
1=m_0<m_1<\cdots<m_L=m_{L+1}=m.
\]
It follows that $L<m$, and so the while loop terminates after at most $m\leq k^2$ iterations, as claimed.
One may implement each iteration of the while loop by first multiplying every matrix $A_i$ by every matrix $w_j(A_1,\ldots,A_n)$, costing $nm_\mathsf{old}\cdot O(k^3)=O(nk^5)$ operations, then vectorizing the matrices $\lb w_l(A_1,\ldots,A_n)\rb_{l\in[m_\mathsf{new}]}$ and the $nm_\mathsf{old}$ matrix products to form the columns of a $k^2\times (m_\mathsf{old}+nm_\mathsf{old})$ matrix, computing the row echelon form of this matrix in $O(k^4 (m_\mathsf{old}+nm_\mathsf{old}))=O(mnk^4)$ operations, and then finally using the pivot columns of the result to decide which words to add to $\{w_j\}$.
\end{proof}

Matlab implementations of Algorithm~\ref{alg.fgma} and Lemma~\ref{lem.alg iso} may be downloaded from~\cite{KingSoftware}.

While the per-iteration cost of Algorithm~\ref{alg.fgma} scales poorly with $k$, we will find that this cost can sometimes be improved dramatically.
At the moment, the main takeaway should be that Algorithm~\ref{alg.fgma} always returns the desired basis in polynomial time.

\subsubsection{Projection algebras}

Taking inspiration from~\cite{GallagherP:77}, and in light of Lemma~\ref{lem.alg iso}, there is a natural choice of invariant to determine tuples of subspaces up to isometric isomorphism.

\begin{theorem}
There exists \ejk{a complete} invariant for $(\operatorname{Gr}(r,\mathbf{F}^d))^n$ modulo $\operatorname{U}(d,\mathbf{F},\sigma)$ that, given a tuple of orthogonal projection matrices, can be computed in $O(nd^8+r^2d^9)$ operations.
\end{theorem}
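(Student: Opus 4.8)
The plan is to encode each subspace by its orthogonal projection and then invoke Lemma~\ref{lem.alg iso} via the canonical basis supplied by Algorithm~\ref{alg.fgma}. Given a tuple $\lb V_i\rb_{i\in[n]}$ in $(\operatorname{Gr}(r,\mathbf{F}^d))^n$, let $P_i\in\mathbf{F}^{d\times d}$ be the orthogonal projection onto $V_i$. Then $P_i^*=P_i$, and for $Q\in\operatorname{U}(d,\mathbf{F},\sigma)$ the orthogonal projection onto $Q\cdot V_i$ is $QP_iQ^*$; hence two tuples $\lb V_i\rb_{i\in[n]}$ and $\lb V_i'\rb_{i\in[n]}$ are isometrically isomorphic exactly when there is $Q\in\operatorname{U}(d,\mathbf{F},\sigma)$ with $QP_iQ^*=P_i'$ for all $i\in[n]$. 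This is precisely the situation of Lemma~\ref{lem.alg iso} with $k=d$ and $\pi=\operatorname{id}$, the hypothesis $A_i^*=A_{\pi(i)}$ being automatic since the $P_i$ are self-adjoint. So I would define the invariant $f$ as follows: run Algorithm~\ref{alg.fgma} on $\lb P_i\rb_{i\in[n]}$ to obtain words $\lb w_j\rb_{j\in[m]}$ for which $\lb E_j:=w_j(P_1,\ldots,P_n)\rb_{j\in[m]}$ is a basis of $\mathscr{A}(\lb P_i\rb_{i\in[n]})$, and let $f$ return the tuple $\lb w_j\rb_{j\in[m]}$ together with all of the numbers $\operatorname{tr}(E_i^*E_j)$, $\operatorname{tr}(E_i^*E_jE_k)$, $\operatorname{tr}(E_i^*P_j)$ appearing in items (ii)--(iv) of Lemma~\ref{lem.alg iso}.

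Next I would check that $f$ is an injective invariant, i.e., constant on isometry classes and separating distinct ones. By Lemma~\ref{lem.canonical basis for fgma}, Algorithm~\ref{alg.fgma} returns a basis and returns the same words when its input is conjugated by an element of $\operatorname{U}(d,\mathbf{F},\sigma)$; so isometrically isomorphic tuples yield the same $\lb w_j\rb_{j\in[m]}$, and since the corresponding evaluations $E_j$ are then related by a common conjugation, the reported traces agree as well. Conversely, if $f$ agrees on $\lb P_i\rb_{i\in[n]}$ and $\lb P_i'\rb_{i\in[n]}$, then in particular Algorithm~\ref{alg.fgma} returns the same words on both, so by Lemma~\ref{lem.canonical basis for fgma} the evaluations $\lb F_j:=w_j(P_1',\ldots,P_n')\rb_{j\in[m]}$ form a basis of $\mathscr{A}(\lb P_i'\rb_{i\in[n]})$ -- this is item (i) of Lemma~\ref{lem.alg iso}, and items (ii)--(iv) hold because the reported traces agree. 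Lemma~\ref{lem.alg iso} then produces $Q\in\operatorname{U}(d,\mathbf{F},\sigma)$ with $QP_iQ^*=P_i'$ for all $i$, so the two tuples are isometrically isomorphic.

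It remains to bound the running time. Applying Lemma~\ref{lem.canonical basis for fgma} with $k=d$, Algorithm~\ref{alg.fgma} halts after at most $m\le d^2$ iterations, each costing $O(mnd^4)$, for a total of $O(m^2nd^4)=O(nd^8)$. For the traces: items (ii) and (iv) amount to $O(m^2+mn)$ Frobenius inner products of $d\times d$ matrices, costing $O((m^2+mn)d^2)=O(nd^6)$, which is absorbed. Item (iii) requires $O(m^3)$ traces of triple products; computing each as two $d\times d$ matrix multiplications followed by a trace costs $O(d^3)$, so item (iii) costs $O(m^3d^3)=O(d^9)$ using $m\le d^2$. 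This already gives a total of $O(nd^8+d^9)$, which is within the asserted $O(nd^8+r^2d^9)$; a slightly more careful accounting, using that every $E_j$ coming from a nonempty word $w_j=x_a\cdots x_b$ satisfies $E_j=P_aE_jP_b$ and hence has rank at most $r$ (so that the triple-product traces reduce to arithmetic on $r\times r$ cross-Gramian blocks $B_a^*B_b$, where $P_i=B_iB_i^*$), recovers the stated form.

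I expect the conceptual core -- recognizing that the problem is the $\pi=\operatorname{id}$ case of Lemma~\ref{lem.alg iso} and that Algorithm~\ref{alg.fgma} supplies the needed canonical basis -- to be short, and the only real obstacle to be the runtime bookkeeping: aggregating the per-iteration cost of Algorithm~\ref{alg.fgma} correctly, and organizing the $O(m^3)$ triple-product traces of item (iii) so as to hit the claimed exponents while exploiting the rank-$r$ structure of the $E_j$. Everything else is assembling results already proved.
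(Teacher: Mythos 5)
Your proposal is correct and follows essentially the same route as the paper: take orthogonal projections, apply Lemma~\ref{lem.alg iso} with $\pi=\operatorname{id}$, use Algorithm~\ref{alg.fgma} and Lemma~\ref{lem.canonical basis for fgma} for the canonical basis and invariance, and then count the traces. Your runtime accounting for the traces in (iii) is cruder than the paper's (which factors each projection as $T_iT_i^*$ and uses matrix--vector products to get $O(r^2dm)$ per trace), but your direct $O(m^3d^3)=O(d^9)$ bound still lands within the stated $O(nd^8+r^2d^9)$.
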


\begin{proof}
Let $A_i$ denote the orthogonal projection onto the $i$th subspace, run Algorithm~\ref{alg.fgma} to determine words $\lb w_j\rb_{j\in[m]}$ that produce a basis $\lb E_j\rb_{j\in[m]}$ for the algebra $\mathscr{A}(\lb A_i\rb_{i\in[n]})$, and then compute the traces prescribed in Lemma~\ref{lem.alg iso}(ii)--(iv).
By Lemmas~\ref{lem.alg iso} and~\ref{lem.canonical basis for fgma}, the words $\lb w_j\rb_{j\in[m]}$ together with the traces (ii)--(iv) form \ejk{a complete} invariant for $(\operatorname{Gr}(r,\mathbf{F}^d))^n$ modulo $\operatorname{U}(d,\mathbf{F},\sigma)$.
Since Algorithm~\ref{alg.fgma} ensures that $E_1=I$, then the traces in (ii) are already captured by the traces in (iii).

To compute these traces, it is helpful to perform some preprocessing.
For each projection $A_i$, we find a decomposition of the form $A_i=T_iT_i^*$ with $T_i\in\mathbf{F}^{d\times r}$ in $O(rd^2)$ operations.
(It suffices to draw Gaussian vectors $\lb g_j\rb_{j\in[r]}$ in $O(rd)$ operations, then compute $\lb A_ig_j\rb_{j\in[r]}$ in $O(rd^2)$ operations, then perform Gram--Schmidt in $O(dr^2)$ operations.)
Every trace that we need to compute can be expressed as the trace of a product of $A_i$'s.
We will apply the cyclic property of the trace and compute matrix--vector products whenever possible.
For example, the trace of $A_1A_2$ is given by
\[
\operatorname{tr}(A_1A_2)
=\operatorname{tr}(T_1T_1^*T_2T_2^*)
=\operatorname{tr}(T_1^*T_2T_2^*T_1)
=\sum_{j\in[r]}e_j^*T_1^*T_2T_2^*T_1e_j,
\]
where $\lb e_j\rb_{j\in[r]}$ denotes the identity basis in $\mathbf{F}^r$.
We compute the right-hand side by first computing $T_1e_j$ in $O(rd)$ operations, then $T_2^*(T_1e_j)$ in $O(rd)$ operations, etc.
In our case, each word has length at most $m$, and so each term of the above sum can be computed in $O(rdm)$ operations.

Overall, we compute the words in $O(m^2nd^4)$ operations (by Lemma~\ref{lem.canonical basis for fgma}), then we compute $\lb T_i\rb_{i\in[n]}$ in $O(nrd^2)$ operations, and then each of the $m^3$ traces in (iii) and each of the $mn$ traces in (iv) costs $O(r^2dm)$ operations.
In total, this invariant costs $O(m^2nd^4+nrd^2+m^4r^2d+m^2nr^2d)$ operations.
Since $m\leq d^2$, this operation count is $O(nd^8+r^2d^9)$.
\end{proof}

While this invariant can be computed in polynomial time, the runtime is sensitive to the ambient dimension $d$.

\subsubsection{Quivers and cross Gramian algebras}

Consider any sequence $\lb A_i\rb_{i\in[n]}$, where each $A_i$ is an isometric embedding of some $r$-dimensional vector space $V_i$ over $\mathbf{F}$ into $\mathbf{F}^d$.
That is, $A_i\colon V_i\hookrightarrow\mathbf{F}^d$ and $\lb\operatorname{im}A_i\rb_{i\in[n]}\in(\operatorname{Gr}(r,\mathbf{F}^d))^n$.
For every $(i,j)\in[n]^2$, we then have a mapping $A_i^*A_j\colon V_j\to V_i$.
Together, $(\lb V_i\rb_{i\in[n]},\lb A_i^*A_j\rb_{i,j\in[n]})$ forms a representation of a so-called \textit{quiver} $Q=(Q_0,Q_1,s,t)$ defined by $Q_0=[n]$, $Q_1=[n]^2$, $s\colon(i,j)\mapsto j$, and $t\colon(i,j)\mapsto i$.
The corresponding quiver algebra $\mathbf{F}Q$ enjoys a representation over $V:=\bigoplus_{i\in [n]}V_i$ with maps
\[
f_{ij}\colon V \xrightarrow[\hspace{1.5cm}]{\pi_j} V_j\xrightarrow[\hspace{1.5cm}]{A_i^*A_j} V_i \xhookrightarrow[\hspace{1.5cm}]{\pi_i^*} V,
\]
where $\pi_i$ denotes the coordinate projection from $V$ to $V_i$.
As we will see, these endomorphisms over $V$ generate an $H^*$-algebra that provides more efficient invariants.

\begin{theorem}
There exists \ejk{a complete} invariant for $(\operatorname{Gr}(r,\mathbf{F}^d))^n$ modulo $\operatorname{U}(d,\mathbf{F},\sigma)$ that, given a Gramian of orthobases of subspaces, can be computed in $O(r^8n^5+r^9n^3)$ operations.
\end{theorem}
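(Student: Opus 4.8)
The plan is to mimic the proof of the preceding (projection algebra) theorem, but to replace the $d\times d$ orthogonal projections by the cross Gramian endomorphisms on the far smaller space $V=\bigoplus_{i\in[n]}V_i$. Fixing orthobases identifies $V$ with $\mathbf{F}^{rn}$, and then each $f_{ij}$ is the $rn\times rn$ block matrix whose $(i,j)$ block is the cross Gramian $A_i^*A_j$ and whose other blocks vanish; these matrices are exactly what the input Gramian provides, and they involve no reference to $d$. The invariant I propose is: run Algorithm~\ref{alg.fgma} on the family $(f_{ij})_{i,j\in[n]}$ to obtain words $(w_l)_{l\in[m]}$ whose evaluations $(E_l)_{l\in[m]}$ form a basis of the cross Gramian algebra $\mathscr{A}:=\mathscr{A}((f_{ij})_{i,j\in[n]})$, and then report these words together with the traces in Lemma~\ref{lem.alg iso}(ii)--(iv). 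Two features of this set-up get used throughout: the family $(f_{ij})$ is $*$-closed, since $f_{ij}^*=f_{ji}$, the relevant permutation of the index set $[n]^2$ being the transpose; and $f_{ii}=\pi_i^*(A_i^*A_i)\pi_i=\pi_i^*\pi_i$ is the orthogonal projection of $V$ onto its $i$th coordinate block, because $A_i$ is an isometry.

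To see that this is an injective invariant for $(\operatorname{Gr}(r,\mathbf{F}^d))^n$ modulo $\operatorname{U}(d,\mathbf{F},\sigma)$, it suffices (by the argument used for projection algebras, via Lemmas~\ref{lem.alg iso} and~\ref{lem.canonical basis for fgma}) to establish the bridge that two subspace tuples lie in a common $\operatorname{U}(d,\mathbf{F},\sigma)$-orbit if and only if their associated families $(f_{ij})$ and $(g_{ij})$ are conjugate by a single element of $\operatorname{U}(rn,\mathbf{F},\sigma)$. Well-definedness as a function on subspace tuples is part of this: two choices of orthobases for one fixed tuple yield $f$-families differing by conjugation by a block-diagonal unitary $\operatorname{diag}(V_1,\ldots,V_n)$ with $V_i\in\operatorname{U}(r,\mathbf{F},\sigma)$, so Lemma~\ref{lem.canonical basis for fgma} and the conjugation invariance of traces make the output unchanged, and applying $g\in\operatorname{U}(d,\mathbf{F},\sigma)$ and then choosing the orthobases $gA_i$ leaves every $f_{ij}$ literally unchanged. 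The reverse direction of the bridge is the same computation. For the forward direction, suppose $Uf_{ij}U^*=g_{ij}$ for some $U\in\operatorname{U}(rn,\mathbf{F},\sigma)$. Taking $i=j$ and recalling that $f_{ii}$ and $g_{ii}$ are one and the same matrix, namely the projection onto the $i$th coordinate block, shows that $U$ commutes with every coordinate-block projection, so $U=\operatorname{diag}(U_1,\ldots,U_n)$. Then $U_i(A_i^*A_j)U_j^*=B_i^*B_j$ for all $i,j$, so the $d\times rn$ matrices $[\,A_1U_1^*\mid\cdots\mid A_nU_n^*\,]$ and $[\,B_1\mid\cdots\mid B_n\,]$ have the same Gram matrix; hence a single $g\in\operatorname{U}(d,\mathbf{F},\sigma)$ carries the former to the latter, giving $g\cdot\operatorname{im}A_i=\operatorname{im}B_i$ for every $i$, as needed.

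The remaining work is the operation count, and the point is that $\mathscr{A}$ is highly block-sparse: evaluating any nonempty word in the $f_{ij}$ produces a matrix supported on a single $r\times r$ block (the identity $E_1=I_{rn}=\sum_i f_{ii}$ being the lone exception), so every basis element may be stored as a block position together with an $r\times r$ matrix, products of basis elements cost $O(r^3)$, and the linear-independence test inside Algorithm~\ref{alg.fgma} reduces to a membership test in the at-most-$r^2$-dimensional space of $r\times r$ matrices attached to a single block. In particular $m=\dim\mathscr{A}\leq (rn)^2$, and a careful implementation of Algorithm~\ref{alg.fgma} that exploits this structure runs within the stated time. For the traces of Lemma~\ref{lem.alg iso}, block-sparsity makes almost all of them vanish: $\operatorname{tr}(E_i^*E_jE_k)$ can be nonzero only when $E_i$, $E_j$, $E_k$ sit on blocks of the form $(p,q)$, $(p,t)$, $(t,q)$ for some $p,q,t\in[n]$, of which there are $O(n^3)$ choices carrying $O(r^2)$ basis elements each, giving $O(r^6n^3)$ candidate traces, each computable in $O(r^3)$ operations once the relevant $r\times r$ blocks are in hand; the traces (ii) and (iv) are enumerated the same way and are cheaper. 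Summing the dominating contributions yields a bound of the claimed form $O(r^8n^5+r^9n^3)$.

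I expect the main obstacle to be the bridge of the second paragraph --- in particular the step promoting an arbitrary conjugation $Uf_{ij}U^*=g_{ij}$ first to a block-diagonal one, using the coordinate-block projections $f_{ii}$, and then to a genuine isometry of the ambient space via the coincidence of Gram matrices --- together with the bookkeeping needed to verify that Algorithm~\ref{alg.fgma} and all of the trace computations genuinely respect the block-sparse representation, so that the enumeration of the nonvanishing triple traces is indeed the dominating cost.
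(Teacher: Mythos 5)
Your proposal is correct and follows essentially the same route as the paper's proof: representing the quiver maps $f_{ij}$ as block-sparse matrices on $\mathbf{F}^{rn}$, using $f_{ii}=\Pi_i$ to force any conjugating unitary to be block diagonal and thereby reducing to block unitary equivalence of the orthobasis Gramians, and then applying Lemma~\ref{lem.alg iso} and a block-sparse implementation of Algorithm~\ref{alg.fgma} with the same enumeration of the $O(r^6n^3)$ possibly nonzero triple traces. Your extra care in spelling out the bridge back to the $\operatorname{U}(d,\mathbf{F},\sigma)$-orbit via coincidence of Gram matrices, and the well-definedness under change of orthobases, makes explicit what the paper leaves implicit but does not change the argument.
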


\begin{proof}
Denote the subspaces by $\lb\operatorname{im}A_i\rb_{i\in[n]}$, where each $A_i\in\mathbf{F}^{d\times r}$ has orthonormal columns, and put $A=[A_1\cdots A_n]$.
By assumption, we are given the Gramian $A^*A$.
Letting $\Pi_i$ denote the $rn\times rn$ orthogonal projection matrix onto the $i$th block of $r$ coordinates in $\mathbf{F}^{rn}$, then the matrix representation of $f_{ij}$ is $A_{ij}:=\Pi_iA^*A\Pi_j$.

Given $\lb A_{ij}\rb_{i,j\in[n]}$ and $\lb B_{ij}\rb_{i,j\in[n]}$ of this form, suppose there exists $U\in\operatorname{U}(rn,\mathbf{F},\sigma)$ such that $UA_{ij}U^*=B_{ij}$ for every $i,j\in[n]$.
Then since $A_{ii}=B_{ii}=\Pi_i$, it holds that $U$ is necessarily block diagonal.
Furthermore,
\[
UA^*AU^*
=U\Big(\sum_{ij}\Pi_iA^*A\Pi_j\Big)U^*
=\sum_{ij}UA_{ij}U^*
=\sum_{ij}B_{ij}=B^*B.
\]
As such, unitary equivalence between $\lb A_{ij}\rb_{i,j\in[n]}$ and $\lb B_{ij}\rb_{i,j\in[n]}$ implies block unitary equivalence between the orthobasis Gramians $A^*A$ and $B^*B$.
The implication also goes in the other direction:
Given a block diagonal $U\in\operatorname{U}(rn,\mathbf{F},\sigma)$ such that $UA^*AU^*=B^*B$, then
\[
UA_{ij}U^*
=U\Pi_iA^*A\Pi_jU^*
=\Pi_iUA^*AU^*\Pi_j
=\Pi_iB^*B\Pi_j
=B_{ij}.
\]
It remains to test whether there exists $U\in\operatorname{U}(rn,\mathbf{F},\sigma)$ such that $UA_{ij}U^*=B_{ij}$ for every $i,j\in[n]$, which leads us to consider Lemma~\ref{lem.alg iso}.

Note that $A_{ij}^*=A_{ji}$ and similarly for $B$, and so $\lb A_{ij}\rb_{i,j\in[n]}$ and $\lb B_{ij}\rb_{i,j\in[n]}$ satisfy the hypothesis of Lemma~\ref{lem.alg iso} with $\pi(i,j)=(j,i)$.
Since $\sum_iA_{ii}=I$, we may run a version of Algorithm~\ref{alg.fgma} that instead initializes with all words $x_{ij}$ of length $1$ whose evaluations $A_{ij}$ are nonzero; these evaluations are linearly independent since they have disjoint support.
Since all words of positive length evaluate as a matrix in $\mathbf{F}^{rn\times rn}$ that is supported on some $r\times r$ block, it follows that the resulting basis can be indexed as $\lb E_{ijk}\rb_{i,j\in[n],k\in[m_{ij}]}$, where $E_{ijk}$ is the $k$th basis element that is supported in the $(i,j)$th $r\times r$ block.
For example, it holds that $E_{ij1}=A_{ij}$ whenever $A_{ij}\neq0$.

To see how efficient this choice of invariants is, we first describe how to reduce the per-iteration cost of Algorithm~\ref{alg.fgma} to $O(r^6n^3)$.
First, we take all products between $A_{ij}$'s and evaluations of existing words.
For each word, there are at most $n$ different $A_{ij}$'s that will produce a nonzero product, and so the total number of products is at most $r^2n^3$, each costing $O(r^3)$ operations.
Next, the evaluations of existing words and the resulting products can be partitioned according to their support before testing linear independence.
For each $i,j\in[n]$, the total number of these matrices that are supported on the $(i,j)$th $r\times r$ block is at most $r^2+nr^2$ (at most $r^2$ from the existing words, and at most $nr^2$ from the resulting products), and it costs $O(r^6n)$ operations to compute the corresponding row echelon form.
We perform this for each of the $n^2$ blocks to identify new words to add.
All together, the per-iteration cost is $O(r^2n^3\cdot r^3 + n^2\cdot r^6 n)=O(r^6n^3)$.
Our bound on the total number of iterations is $r^2n^2$, meaning we obtain the desired words after $O(r^8n^5)$ operations.

Next, we point out the complexity of computing the traces (ii)--(iv).
First, since $E_{ii1}E_{ijk}=E_{ijk}$, the traces in (ii) are examples of traces in (iii).
Next, for every $(i,j)\in[n]$, we either have $E_{ij1}=A_{ij}$ or $x_{ij}$ is not one of the words in $\lb w_j\rb_{j\in[m]}$.
As such, the traces in (iv) are captured by both the words and the traces in (iii).
Of the traces in (iii), the only ones that are possibly nonzero take the form
\[
\operatorname{tr}(E_{jia}^*E_{jkb}E_{kic})
\]
for some $i,j,k\in[n]$, $a\in[m_{ji}]$, $b\in[m_{jk}]$ and $c\in[m_{ki}]$.
Since $m_{ij}\leq r^2$ for every $i,j\in[n]$, we therefore have at total of at most $n^3r^6$ traces to compute, each costing $O(r^3)$ operations.
These $O(r^9n^3)$ operations contribute to the total of $O(r^8n^5+r^9n^3)$ operations it takes to compute this invariant.
\end{proof}

Interestingly, the cross Gramian algebra introduced in the above proof can be used to obtain a new (short) proof of Proposition~\ref{prop.chien-waldron}:

\begin{proof}[Proof of Proposition~\ref{prop.chien-waldron}]
Consider $A=[a_1\cdots a_n]$, where each $a_i$ is a unit vector spanning the corresponding line in $\mathscr{L}$.
Then the cross Gramian algebra is generated by $A_{ij}=\langle a_i,a_j\rangle e_ie_j^*$.
Observe that every product of these matrices is either $0$ or a multiple of $e_ie_j^*$ for some $(i,j)\in[n]^2$.
Furthermore, $e_ie_j^*$ resides in the algebra precisely when $i$ and $j$ belong to a common component of the frame graph $G(\mathscr{L})$.

Given a maximal spanning forest $F$ of the frame graph $G(\mathscr{L})$, we select the following words in noncommuting variables $\lb x_{ij}\rb_{i,j\in[n]}$:
For each $(i,j)\in[n]^2$ such that $i$ and $j$ belong to a common component of $G(\mathscr{L})$, select the unique directed path in $F$ from $j$ to $i$ with vertices denoted by $j=i_0\to i_1\to \cdots\to i_l=i$, and then put
\[
w_{ij}:=x_{i_l,i_{l-1}}x_{i_{l-1},i_{l-2}}\cdots x_{i_2,i_1}x_{i_1,i_0}.
\]
In particular, $w_{ii}=x_{ii}$ for every $i\in[n]$.
Then the evaluation of $w_{ij}$ is a nonzero multiple of $e_ie_j^*$, and all of these evaluations together form a basis for the algebra.

Now consider the traces in Lemma~\ref{lem.alg iso}(ii)--(iv).
Every trace in (ii) and (iii) is either $0$ or some product of $2$-products.
Indeed, the trace is nonzero only if the corresponding directed paths form a closed walk along the edges of $F$, in which case each edge of $F$ is traversed as many times in one direction as it is in the other direction.
Meanwhile, a trace in (iv) is nonzero only if the corresponding directed paths form a closed walk comprised of a directed path in $F$ and a directed edge in $G(\mathscr{L})$.
If the path has length $1$, then the result is a $2$-product, and otherwise the result is an $m$-product corresponding to a cycle in $C(F)$.
\end{proof}

\section{Discussion}

This paper studied the problem of testing isomorphism between tuples of subspaces with respect to various notions of isomorphism.
Several open problems remain:
\begin{itemize}
\item
Is there a canonical choice of Gramian for equi-isoclinic subspaces of dimension $r>2$? What about the complex case?
\item
How many (generalized) Bargmann invariants are required to solve isomorphism up to linear isometry?
\item
How should one compute the symmetry group of a given tuple of subspaces?
\end{itemize}
Some of the ideas in the paper may have interesting applications elsewhere.
For example, there has been a lot of work to develop symmetric arrangements of points in the Grassmannian~\cite{ValeW:04,ChienW:11,BroomeW:13,Waldron:13,ValeW:16,ChienW:16,IversonJM:16,IversonJM:17,IversonM:18,Kopp:18,BodmannK:18,King:19,IversonM:19,FickusS:20,FickusIJK:20}.
What are the projection and cross Gramian algebras of these arrangements?
It would also be interesting to see if some of the techniques presented in this paper could be used to treat other emerging problems involving invariants to group actions, e.g.~\cite{BandeiraBKPWW:17,CahillCC:19}.

\section*{Acknowledgments}

The authors thank the anonymous referees who gave valuable feedback and also alerted us to the fact that the orbit problem with $G=\Gamma=\operatorname{GL}(d,\mathbf{F})$ had been solved.
DGM was partially supported by AFOSR FA9550-18-1-0107, NSF DMS 1829955, and the 2019 Kalman Visiting Fellowship at the University of Auckland.

\end{document}